\newtheorem{theorem}{Theorem}[section]
\newtheorem{lemma}[theorem]{Lemma}
\newtheorem{corollary}[theorem]{Corollary}
\newtheorem{proposition}[theorem]{Proposition}
\theoremstyle{definition}
\newtheorem{definition}[theorem]{Definition}
\newtheorem{remark}[theorem]{Remark}
\newtheorem{question}[theorem]{Question}
\newcommand{\isom}{\cong}
\newcommand{\F}{\mathbb{F}}
\newcommand{\Z}{\mathbb{Z}}
\def\Ddots{\mathinner{\mkern1mu\raise\p@
\vbox{\kern7\p@\hbox{.}}\mkern2mu
\raise4\p@\hbox{.}\mkern2mu\raise7\p@\hbox{.}\mkern1mu}}
\newcommand{\normal}[1]{\langle\!\langle #1 \rangle\!\rangle}
\def\injects{\hookrightarrow}
\DeclareSymbolFontAlphabet{\amsmathbb}{AMSb}
\DeclareMathOperator{\rk}{rk}
\DeclareMathOperator{\rr}{rr}
\DeclareMathOperator{\core}{Core}
\DeclareMathOperator{\aut}{Aut}
\DeclareMathOperator{\CAT}{CAT}
\DeclareMathOperator{\id}{Id}
\DeclareMathOperator{\sym}{Sym}
\DeclarePairedDelimiter\abs{\lvert}{\rvert}
\let\oldabs\abs
\def\abs{\@ifstar{\oldabs}{\oldabs*}}
\newcounter{cases}
\newcounter{subcases}[cases]
\tikzset{
math to/.tip={Glyph[glyph math command=rightarrow]},
loop/.tip={Glyph[glyph math command=looparrowleft, swap]},
loop'/.tip={Glyph[glyph math command=looparrowleft]},
 weird/.tip={Glyph[glyph math command=Rrightarrow, glyph length=1.5ex]},
  pi/.tip={Glyph[glyph math command=pi, glyph length=1.5ex, glyph axis=0pt]},
}
\begin{document}

\title[Embedding fg free-by-cyclic groups in \{fg free\}-by-cyclic groups]{Embedding finitely generated free-by-cyclic groups in \{finitely generated free\}-by-cyclic groups}

\author{Marco Linton}
\address{Instituto de Ciencias Matem\'aticas, CSIC-UAM-UC3M-UCM, Madrid, Spain}
\email{marco.linton@icmat.es}

\begin{abstract}
We refine Feighn--Handel's results on subgroups of mapping tori of free groups to the special case of free-by-cyclic groups. We use these refinements to show that any finitely generated free-by-cyclic group embeds in a \{finitely generated free\}-by-cyclic group as a retract. When the free-by-cyclic group is hyperbolic, it embeds in a hyperbolic \{finitely generated free\}-by-cyclic group as a quasi-convex subgroup. Combined with a result of Hagen--Wise, this implies that all hyperbolic free-by-cyclic groups are cocompactly cubulated.
\end{abstract}

\maketitle

\section{Introduction}

Feighn--Handel developed a powerful tool in \cite{FH99} to study finitely generated subgroups of mapping tori $M(\psi)$ of free group monomorphisms $\psi\colon \F\to \F$. The purpose of this article is to refine Feighn--Handel's techniques for the special case in which $\psi$ is an automorphism ---that is, when $M(\psi)$ is a free-by-cyclic group $\F\rtimes_{\psi}\Z$. Here and throughout the article, we do not assume that $\F$ is finitely generated.

Feighn--Handel's main technical result stated that a finitely generated mapping torus of a free group $M(\psi)$ splits as a HNN-extension $F*_{\phi}$ with $F$ a finitely generated free group and with $\phi$ an isomorphism identifying a free factor of $F$ with another subgroup of $F$. Our first main result, \cref{FHmain2}, shows that a finitely generated free-by-cyclic group $\F\rtimes_{\psi}\Z$ splits as a HNN-extension $F*_{\phi}$ with $F$ a finitely generated free factor of $\F$ and with $\phi$ an isomorphism identifying two free factors of $F$. 

Using this, we characterise when a free-by-cyclic group $\F\rtimes_{\psi}\Z$ is finitely generated in terms of the automorphism $\psi$. Note that every non-trivial subgroup of a free-by-$\Z$ group is itself free-by-$\Z$.

\begin{theorem}
\label{thm:main1}
Let $\F$ be a free group, let $\psi\in\aut(\F)$ be an automorphism and let $G = \F\rtimes_{\psi}\Z$. Then $G$ is finitely generated if and only if there is a free product decomposition
\[
\F = A*\left(\Asterisk_{i\in \Z}C_i\right)
\]
where $A$ and $C_0$ are finitely generated and where $C_i = \psi^i(C_0)$ for all $i\in \Z$.
\end{theorem}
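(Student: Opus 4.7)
The backward implication is a direct check: given the decomposition $\F = A * \left(\Asterisk_{i\in\Z} C_i\right)$ with $C_i = \psi^i(C_0)$, if $t$ denotes a generator of the $\Z$-factor then $C_i = t^i C_0 t^{-i}$, and hence $G = \F\rtimes_\psi\Z$ is generated by the finite set consisting of a generating set of $A$, a generating set of $C_0$, and $t$.

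For the forward implication, the plan is to apply \cref{FHmain2} to obtain an HNN-decomposition $G \cong F*_\phi$ in which $F$ is a finitely generated free factor of $\F$ and $\phi\colon F_0 \to F_1$ is an isomorphism between free factors $F_0, F_1$ of $F$. After adjusting $\psi$ within its outer automorphism class, one may assume the stable letter of the HNN coincides with the generator $t$ of the $\Z$-factor, so that $\phi(x) = \psi(x)$ for all $x\in F_0$. Applying Bass--Serre theory, $\F$ acts on the Bass--Serre tree $T$ with quotient $\F\backslash T$ isomorphic to the line $\Z$: the vertex stabilizer at height $n$ is $V_n := \psi^n(F)$, and the edge stabilizer between heights $n-1$ and $n$ is $E_n := \psi^n(F_0)$. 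The edge group $E_n$ sits inside $V_n$ as the free factor $\psi^n(F_0)$ and inside $V_{n-1}$ as the free factor $\psi^{n-1}(F_1)$, using $\psi(F_0) = F_1$.

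The next step is to collapse this line graph of groups into a free product. Since each edge group is a free factor of both adjacent vertex groups, one can iteratively use the amalgam identity $(C*A')*_C B = A'*B$, valid whenever $C$ is a free factor of $A = C*A'$. Given complementary free factors $R_0, R_1$ with $F = F_0 * R_0 = F_1 * R_1$, iterating the collapse from left to right and then taking the limit produces
\[
\F = F * \left(\Asterisk_{n\geq 1}\psi^n(R_0)\right) * \left(\Asterisk_{n\leq -1}\psi^n(R_1)\right).
\]

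The main obstacle is converting this expression into the prescribed cyclic form, and my plan is to arrange the HNN-decomposition so that $F_0$ and $F_1$ share a \emph{common} complementary free factor $R$ in $F$, i.e.\ so that $F = F_0 * R = F_1 * R$. This should be achievable by a Nielsen-style argument exploiting that $F_0$ and $F_1 = \psi(F_0)$ have the same rank, or, if necessary, by enlarging the finitely generated free factor $F$ provided by \cref{FHmain2} to gain sufficient flexibility in the choice of complement. Granted this, taking $R_0 = R_1 = R$ the decomposition above simplifies to $\F = F * \Asterisk_{n\neq 0}\psi^n(R)$, and substituting $F = F_0 * R$ recombines the product as $\F = F_0 * \Asterisk_{n\in\Z}\psi^n(R)$. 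Setting $A := F_0$ and $C_0 := R$ produces the decomposition sought, both being finitely generated as free factors of the finitely generated group $F$ and with $C_i = \psi^i(C_0)$ by construction.
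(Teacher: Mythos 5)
Your Bass--Serre route is genuinely different from the paper's. The paper reads off the infinite decomposition $\F = \left(\Asterisk_{i\leq 0}\psi^i(D)\right)*E*\left(\Asterisk_{i\geq 0}\psi^i(C)\right)$ directly from condition \cref{itm:inj2} of \cref{FHmain2}, shows that $\Asterisk_{i\in\Z}\psi^i(C)$ is a free factor of $\F$ by pushing the subgroups $\F_j$ forward under powers of $\psi^{-1}$, and then bounds the rank of a complement $A$. You instead recover essentially the same intermediate decomposition by a graph-of-groups collapse (with $F = D*E*C$, $R_0 = C$, $R_1 = D$ in the notation of \cref{FHmain2}) and then try to shortcut to the cyclic form by finding a common complement; this would give a cleaner final expression with $A = F_0$, but it relies on a step you have not carried out.

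That common-complement step is a genuine gap. You assert it ``should be achievable by a Nielsen-style argument exploiting that $F_0$ and $F_1$ have the same rank,'' but equal ranks alone do not obviously produce a common complementary free factor, and ``enlarging $F$'' would require its own justification that the enlargement stays a free factor of $\F$ compatible with the HNN structure. What rescues the plan is precisely the extra content of \cref{FHmain2} that your paraphrase drops: the theorem gives $F = Z^{\#} = D*(X\cap Y)^{\#}*C$ with $F_0 = X^{\#} = D*(X\cap Y)^{\#}$ and $F_1 = Y^{\#} = (X\cap Y)^{\#}*C$, so $F_0$ and $F_1$ overlap exactly in $E = (X\cap Y)^{\#}$ and have disjoint complements $C$, $D$ of equal rank (since $\phi$ is an isomorphism). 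Choosing bases $d_1,\ldots,d_p$ of $D$ and $c_1,\ldots,c_p$ of $C$, the subgroup $R = \langle d_1c_1,\ldots,d_pc_p\rangle$ is a common complement: Nielsen moves take $\{d_i\}\cup\{e_j\}\cup\{d_ic_i\}$ and $\{e_j\}\cup\{c_i\}\cup\{d_ic_i\}$ to free bases of $F$, so $F = F_0*R = F_1*R$. Once this is made explicit your argument closes; as written it does not, because the critical existence claim is only sketched.
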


In \cite{Li25b} the author proved that finitely generated mapping tori of free groups have a canonical collection of maximal submapping tori with respect to which they are relatively hyperbolic and relatively locally quasi-convex. Using \cref{thm:main1}, we state a strengthening of this result, \cref{thm:rel_hyp}.

Chong--Wise proved in \cite{CW24} that a finitely generated mapping torus of a free group embeds in a mapping torus of a finitely generated free group. They conjectured \cite[Conjecture 1.2]{CW24} that an analogous statement should hold for free-by-cyclic groups: that is, a finitely generated free-by-cyclic group embeds as a subgroup of a \{finitely generated free\}-by-cyclic group. Our second main result uses \cref{thm:main1} to confirm their conjecture. A more general question, which we also answer, was also posed independently by Wu--Ye in \cite[Question 7.4]{WY25}.

\begin{theorem}
\label{thm:main2}
If $G = \F\rtimes_{\psi}\Z$ is a finitely generated free-by-$\Z$ group, then $G$ embeds in a \{finitely generated free\}-by-$\Z$ group. More explicitly, there is some finitely generated free group $F$, an automorphism $\theta\in \aut(F)$ and an embedding $\iota\colon\F\injects F$ making the following diagram commute:
\[
\begin{tikzcd}
\mathbb{F} \arrow[d, "\iota", hook] \arrow[r, "\psi"] & \mathbb{F} \arrow[d, "\iota", hook] \\
F \arrow[r, "\theta"]                                 & F                                  
\end{tikzcd}
\]
Moreover, denoting by $H = F\rtimes_{\theta}\Z$, the pair $(H, \{G\})$ is relatively hyperbolic and $G$ is a retract of $H$.
\end{theorem}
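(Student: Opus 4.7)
The plan is to use \cref{thm:main1} to obtain the structural decomposition of $\F$ and then to encode the shift action of $\psi$ on the $C_i$ by introducing a new stable letter.

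First, apply \cref{thm:main1} to write $\F = A * \left(\Asterisk_{i \in \Z} C_i\right)$ with $A$ and $C_0$ finitely generated and $\psi(C_i) = C_{i+1}$. Let $F := A * C_0 * \langle s \rangle$, a finitely generated free group of rank $\rk(A) + \rk(C_0) + 1$, and define a homomorphism $\iota \colon \F \to F$ by $\iota|_A = \mathrm{id}_A$ and, for $c \in C_0$ and $i \in \Z$, $\iota(\psi^i(c)) = s^i c s^{-i}$. A Stallings folding computation shows that the subgroup of $F$ generated by $A$ and all $s^i C_0 s^{-i}$ is isomorphic to the free product $A * \Asterisk_{i \in \Z}(s^i C_0 s^{-i})$, so $\iota$ is injective.

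Second, define $\theta \colon F \to F$ on the free basis by $\theta|_A := \iota \circ \psi|_A$, $\theta(c) := s c s^{-1}$ for $c \in C_0$, and $\theta(s) := s$. Then $\theta \circ \iota = \iota \circ \psi$ by construction. To verify $\theta \in \aut(F)$: the image contains $s$ and $s C_0 s^{-1}$, hence $C_0$, and it contains $\iota(\psi(A))$. Applying $\psi$ to $\F = A * \Asterisk_i C_i$ also yields $\F = \psi(A) * \Asterisk_i C_i$, so both $A$ and $\iota(\psi(A))$ are free-product complements of $\Asterisk_{i \in \Z}(s^i C_0 s^{-i})$ inside $\iota(\F)$; consequently $A \subset \langle \iota(\psi(A)),\, s^i C_0 s^{-i} : i \in \Z \rangle \subset \theta(F)$. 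Hence $\theta$ is surjective, and since finitely generated free groups are Hopfian, $\theta$ is an automorphism. The induced homomorphism $G \to F \rtimes_\theta \Z$ sending $m \in \F$ to $\iota(m)$ and $t \mapsto t$ is then well-defined by the commutative diagram, and injective because $\iota$ is injective on the fibre and the map of $\Z$-quotients is the identity.

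For the hyperbolic case, the naive choice $\theta(s) = s$ produces a $\Z^2 = \langle s, t \rangle$ subgroup in $F \rtimes_\theta \Z$, ruling out hyperbolicity. One must instead set $\theta(s) := s v$ for an element $v \in F$ chosen so that $\theta$ remains an automorphism (for which it suffices that $v \in \langle \iota(\psi(A)),\, s C_0 s^{-1}\rangle$, by the same complementation argument) and so that $\theta$ is atoroidal on $F$. By Brinkmann's theorem, hyperbolicity of $G$ is equivalent to atoroidality of $\psi$ on $\F$; this transfers to atoroidality of $\theta$ on $\iota(\F)$, and the essential task is to rule out periodic conjugacy classes of $\theta$ that involve $s$. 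Once atoroidality of $\theta$ is established, Brinkmann's theorem again gives hyperbolicity of $F \rtimes_\theta \Z$, and quasiconvexity of the image of $G$ follows from the explicit combinatorial structure of the embedding together with standard quasiconvexity arguments for HNN subgroups.

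The main obstacle is the hyperbolic case: selecting $v$ so that $\theta$ is simultaneously an automorphism and atoroidal. This requires a careful analysis of $\theta$-orbits in $F = A * C_0 * \langle s \rangle$, leveraging the dynamics of $\psi$ provided by \cref{thm:main1}.
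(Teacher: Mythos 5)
Your argument for the non-hyperbolic part is correct, and it is a genuinely different and more elementary construction than the paper's. Where you take $F = A * C_0 * \langle s\rangle$ and realise $\Asterisk_{i\in\Z}C_i$ as $\Asterisk_{i\in\Z}\, s^i C_0 s^{-i}$ via conjugation by the new letter $s$, the paper instead fixes an auxiliary finitely generated free group $D$ of rank $\rk(C_0)$, an atoroidal $\phi\in\aut(D)$ (so that $D\rtimes_\phi\Z$ is hyperbolic by Brinkmann), and embeds $\Asterisk_{i\in\Z}C_i$ into $D$ as $\Asterisk_{i\in\Z}\langle\phi^i(\mathcal{D}_n)\rangle$, using a theorem of Arzhantseva to guarantee that after raising generators to a high power and replacing $\phi$ by $\phi^n$, the subgroup $\langle\mathcal{D}_n, t\rangle$ of $D\rtimes_\phi\Z$ is quasi-convex and free, and so meets $D$ in exactly the desired free product. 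Your conjugation trick is cleaner and avoids Arzhantseva entirely, and your verification that $\theta$ is surjective (via the two free-product complements $A$ and $\psi(A)$ of $\Asterisk_i C_i$, then Hopficity) is sound. The price you pay is that your auxiliary stable letter $s$ is fixed by $\theta$, which is exactly what makes the hyperbolic case break.

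For the hyperbolic case there is a genuine gap. You correctly observe that $\theta(s)=s$ yields $\langle s,t\rangle\cong\Z^2$, and you propose to twist to $\theta(s)=sv$ and then argue that $\theta$ can be made atoroidal; but you neither produce such a $v$ nor give an argument that any such $v$ exists, and this is not a routine verification: one needs to rule out periodic conjugacy classes involving $s$ for \emph{all} powers of $\theta$, while simultaneously preserving surjectivity. Even granting atoroidality, the claim that quasi-convexity of $\iota(\F)\rtimes\Z$ ``follows from standard quasiconvexity arguments for HNN subgroups'' is not justified and is not standard; quasi-convexity of $G$ in $F\rtimes_\theta\Z$ is the hard part of the theorem. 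The paper sidesteps both issues by building hyperbolicity into the auxiliary piece from the start: it writes
\[
F\rtimes_\theta\Z \;=\; \bigl(\F\rtimes_\psi\Z\bigr)\underset{\langle C_0,t\rangle=\langle\mathcal{D}_n,t\rangle}{\Asterisk}\bigl(D\rtimes_\phi\Z\bigr),
\]
uses \cref{thm:rel_hyp} to show that $\langle C_0,t\rangle$ is malnormal and quasi-convex in $G$ (malnormality is verified directly from the free product decomposition, and quasi-convexity comes from relative quasi-convexity plus Hruska's criterion), notes $\langle\mathcal{D}_n,t\rangle$ is quasi-convex in the hyperbolic group $D\rtimes_\phi\Z$ by Arzhantseva, and then invokes Kapovich's combination theorem to get both hyperbolicity of the amalgam and quasi-convexity of the vertex group $G$ in one stroke. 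If you want to salvage your $F=A*C_0*\langle s\rangle$ approach for the hyperbolic case, you would need an argument of comparable strength in place of the two sentences you currently have; as written, the hyperbolic half of the theorem is not proved.
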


Bridson--Groves showed that a \{finitely generated free\}-by-$\Z$ group satisfies a quadratic isoperimetric inequality \cite{BG10}. The fact that a finitely generated free-by-cyclic group $G$ embeds as a retract of such a group then implies that $G$ will also satisfy a quadratic isoperimetric inequality.

\begin{corollary}
\label{cor:iso}
A finitely generated free-by-$\Z$ group satisfies a quadratic isoperimetric inequality.
\end{corollary}

Since a group that is relatively hyperbolic with respect to hyperbolic groups is hyperbolic by a result of Osin \cite[Corollary 2.41]{Os06}, we obtain the following corollary.

\begin{corollary}
A finitely generated hyperbolic free-by-$\Z$ group embeds as a quasi-convex subgroup of a hyperbolic \{finitely generated free\}-by-$\Z$ group.
\end{corollary}

Hagen--Wise showed in \cite{HW15} that a hyperbolic \{fg free\}-by-$\Z$ group acts properly and cocompactly on a $\CAT(0)$ cube complex. By a result of Agol \cite{Ag13}, this implies that hyperbolic \{fg free\}-by-$\Z$ groups are virtually compact special in the sense of Haglund--Wise \cite{HW08}. Since compact special groups are precisely the quasi-convex subgroups of right angled Artin groups (see \cite{HW08}), it follows that quasi-convex subgroups of hyperbolic virtually compact special groups are themselves virtually compact special. Combining these facts with \cref{thm:main2} we obtain the following corollary.

\begin{corollary}
\label{cor}
A hyperbolic free-by-cyclic group is virtually compact special.
\end{corollary}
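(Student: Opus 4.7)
The plan is to chain together the four inputs assembled in the paragraph preceding the statement. Let $G$ be a hyperbolic free-by-cyclic group. The first move is to apply \cref{thm:main2} to embed $G$ as a quasi-convex subgroup of a hyperbolic \{fg free\}-by-$\Z$ group $H = F\rtimes_{\theta}\Z$. All subsequent work takes place inside this ambient group $H$.

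Next I would invoke the Hagen--Wise cubulation theorem \cite{HW15} to endow $H$ with a proper cocompact action on a $\CAT(0)$ cube complex, and then apply Agol's theorem \cite{Ag13} to upgrade this to virtual compact specialness of $H$. To transfer the property down to $G$, I would pick a finite-index compact special subgroup $H_0\leq H$ and set $G_0 = G\cap H_0$. Then $G_0$ has finite index in $G$ and is quasi-convex in $H$; since $H_0$ is quasi-isometrically embedded in $H$ as a finite-index subgroup, $G_0$ is also quasi-convex inside $H_0$.

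Finally, Haglund--Wise's characterisation \cite{HW08} produces a quasi-convex embedding $H_0\injects A_\Gamma$ into a right-angled Artin group, and composing with $G_0\injects H_0$ realises $G_0$ as a quasi-convex subgroup of $A_\Gamma$. Appealing to the same characterisation in the opposite direction shows that $G_0$ is compact special, and hence $G$ is virtually compact special.

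There is no real obstacle beyond the theorems being cited: the substantive input is \cref{thm:main2}, and in particular its quasi-convexity clause, which is exactly what is needed so that the cubical and specialness properties of the \{fg free\}-by-$\Z$ overgroup descend to $G$. The remaining argument is a standard packaging of Hagen--Wise, Agol, and the Haglund--Wise RAAG characterisation.
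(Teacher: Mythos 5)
Your argument is exactly the paper's: embed $G$ quasi-convexly into a hyperbolic $\{$fg free$\}$-by-$\Z$ group via \cref{thm:main2}, cubulate that group by Hagen--Wise, apply Agol to get virtual compact specialness, and then use the Haglund--Wise characterisation to pull the property back to the quasi-convex subgroup $G$. The only difference is that you spell out the last step (intersecting with a compact special finite-index subgroup and composing quasi-convex embeddings into a RAAG), whereas the paper records it as the single folklore fact that quasi-convex subgroups of hyperbolic virtually compact special groups are virtually compact special.
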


Since every finitely generated subgroup of a hyperbolic free-by-cyclic group is hyperbolic by a result of Gersten \cite{Ge96} (combined with coherence \cite{FH99}), \cref{cor} implies that every finitely generated subgroup of a hyperbolic free-by-cyclic group is also virtually compact special. 

Cohen proved a similar statement in \cite{Co25} for one-relator groups: a one-relator group admitting an acylindrical one-relator hierarchy (which is hence hyperbolic and virtually compact special by \cite{Li25a}) has all its finitely generated subgroups virtually compact special. Via the results in \cite{KL24a}, \cref{cor} implies that all hyperbolic virtually compact special one-relator groups also have have all their finitely generated subgroups virtually compact special.

An automorphism $\psi\in \aut(\F)$ is \emph{fully irreducible} if there is no finitely generated non-trivial proper free factor $H\leqslant \F$ and integer $m\geqslant 1$ so that $\psi^m(H)$ is conjugate to $H$. The automorphism $\theta$ from \cref{thm:main2} is never fully irreducible by its construction. It is likely that \cref{thm:main2} can be improved when $\psi$ is assumed fully irreducible.

\begin{question}
\label{question}
If $\psi\in \aut(\F)$ is a fully irreducible automorphism so that $\F\rtimes_{\psi}\Z$ is finitely generated, is there a finitely generated free group $F$, an embedding $\iota\colon \F\to F$ and a fully irreducible automorphism $\theta\in \aut(F)$ so that $\theta\circ\iota = \iota\circ\psi$?
\end{question}

The analogue of \cref{question} for monomorphisms $\F\to\F$ was established by Chong--Wise \cite{CW24}.

\subsection*{Acknowledgements}
The author thanks Ashot Minasyan for suggesting proving that $G$ can embedded as a retract in \cref{thm:main2} and for pointing out a simplification in the proof of \cref{thm:main2}. The author also thanks the anonymous referee for helpful comments and Martin Bridson for pointing out \cref{cor:iso}. This work has received support from the grant 202450E223 (Impulso de líneas científicas estratégicas de ICMAT).

\section{Preliminaries}

\subsection{Graphs, folds and subgroups of free groups}

A \emph{graph} is a 1-dimensional CW-complex $\Gamma$. The 0-cells of $\Gamma$ are the \emph{vertices} and the 1-cells are the \emph{edges}. We shall use $I$ to denote any graph homeomorphic to the interval and $S^1$ to denote any graph homeomorphic to the circle. Maps $\Gamma\to\Lambda$ between graphs will always be assumed to be combinatorial ---that is, vertices map to vertices and edges map homeomorphically to edges. Maps from the (subdivided) interval $I\to \Gamma$ are called paths and maps from the (subdivided) circle $S^1\to \Gamma$ are cycles. A loop is simply a path with the same initial and terminal vertex. A map of graphs is an \emph{immersion} if it is locally injective. A pointed graph is a pair $(\Gamma, v)$ with $\Gamma$ a graph and $v\in \Gamma$ a vertex. A pointed map $(\Gamma, v)\to (\Lambda, u)$ is a map of graphs $\Gamma\to\Lambda$ sending the basepoint $v$ to the basepoint $u$.

If $f\colon (\Gamma, v)\to(\Lambda, u)$ is a graph map, write $f_*\colon \pi_1(\Gamma, v)\to \pi_1(\Lambda, u)$ for the induced map on fundamental groups. When $f$ is understood, we use $\Gamma^{\#}$ to denote the image $f_*(\pi_1(\Gamma, v))$.

A map of graphs $f\colon \Gamma\to \Lambda$ is a \emph{fold} if it identifies a single pair of oriented edges $e_1, e_2\subset \Gamma$ with a common origin. It is a classical observation of Stallings' \cite{St83} that any graph map $f\colon \Gamma\to \Lambda$, with $\Gamma$ a finite graph, factors as a finite sequence of folds $f_i\colon \Gamma_{i-1}\to\Gamma_i$ followed by an immersion $h\colon \Gamma_k\to \Lambda$:
\[
\begin{tikzcd}
\Gamma = \Gamma_0 \arrow[r, "f_1"'] \arrow[rrrr, "f", bend left] & \Gamma_1 \arrow[r, "f_2"'] & \ldots \arrow[r, "f_k"'] & \Gamma_k \arrow[r, "h"'] & \Lambda
\end{tikzcd}
\]
Although the sequence of folds $f_i$ may not be unique, the graph $\Gamma_k$ and the map $h$ is unique.

A graph $\Gamma$ is \emph{core} if it is the union of the images of immersed cycles $S^1\to \Gamma$. A pointed graph $(\Gamma, v)$ is \emph{pointed core} if it is the union of the images of immersed loops $I\to \Gamma$ at $v$. If $\Gamma$ is a graph, the \emph{core of $\Gamma$} is the subgraph $\core(\Gamma)\subset \Gamma$ consisting of the union of all images of immersed cycles in $\Gamma$ (it may be empty). If $(\Gamma, v)$ is a pointed graph, the \emph{pointed core of $(\Gamma, v)$} is the pointed subgraph $\core(\Gamma, v)\subset\Gamma$ consisting of the union of all images of immersed loops at $v$.

If $H\leqslant \pi_1(\Lambda, u)$ is any subgroup, then there is a unique immersion $f\colon (\Gamma, v)\to(\Lambda, u)$ so that $(\Gamma, v)$ is pointed core and $f_*(\pi_1(\Gamma, v)) = H$. This (pointed) graph, which we shall denote by $\Gamma(H)$ and call the \emph{pointed subgroup graph for $H$}, is the pointed core subgraph of the pointed cover of $\Lambda$ associated to $H$. 

If $[H]$ is the conjugacy class of a subgroup $H\leqslant \pi_1(\Lambda, u)$, then there is a unique immersion $f\colon \Gamma\to\Lambda$ so that $\Gamma$ is core and $[f_*(\pi_1(\Gamma, v))] = [H]$, where $v\in f^{-1}(u)$. This graph, which we shall denote by $\Gamma[H]$ and call the \emph{subgroup graph for $[H]$}, is the core subgraph of the cover of $\Lambda$ associated to $H$.

If $\delta\in \pi_1(\Lambda, u)$, we shall often write $\Gamma(\delta)$ or $\Gamma[\delta]$ instead of $\Gamma(\langle \delta\rangle)$ or $\Gamma[\langle\delta\rangle]$.

\subsection{Graph pairs and graph triples}

Let $f_Z\colon (Z, v)\to (R, u)$ be a pointed map of connected graphs. If $X\subset Z$ is a connected subgraph containing the basepoint, we call $(Z, X)$, together with the map $f_Z$, which we often suppress, a \emph{graph pair}. If $Y, X\subset Z$ are connected subgraphs such that $X\cap Y$ is connected and contains the basepoint and $X\cup Y = Z$, we call $(Z, Y, X)$, together with the map $f_Z$, which we often suppress, a \emph{graph triple}. Note that these conditions imply that $\pi_1(Z, v)$ is generated by $\pi_1(X, v)$ and $\pi_1(Y, v)$. A graph pair $(Z, X)$ or a graph triple $(Z, Y, X)$ is \emph{tight} if the map $f_Z$ is an immersion.

The \emph{relative rank} of a graph pair $(Z, X)$ is defined as
\[
\rr(Z, X) = \rk(\pi_1(Z, v)/\normal{\pi_1(X, v)}),
\]
where $\rk(G)$ denotes the \emph{rank} of the group $G$. Since $X\subset Z$, the subgroup $\pi_1(X, v)$ is a free factor of $\pi_1(Z, v)$ and so the quotient $\pi_1(Z, v)/\normal{\pi_1(X, v)}$ is a free group. When $Z - X$ consists of finitely many 0-cells and 1-cells, we have
\[
\rr(Z, X) = \rk(\pi_1(Z, v)) - \rk(\pi_1(X, v)) = -\chi(Z, X).
\]

Let $(Z_1, X_1)$ and $(Z_2, X_2)$ be graph pairs with $f_1\colon Z_1\to R$, $f_2\colon Z_2\to R$ the associated maps. A \emph{map of graph pairs} is a pair
\[
q = (q_{Z}, q_X) \colon (Z_1, X_1)\to (Z_2, X_2)
\]
of graph maps $q_Z\colon Z_1\to Z_2$, $q_X\colon X_1\to X_2$ making the following diagram commute:
\[
\begin{tikzcd}
X_1 \arrow[r, hook] \arrow[d, "q_X"] & Z_1 \arrow[rd, "f_1"] \arrow[d, "q_Z"] &   \\
X_2 \arrow[r, hook]                  & Z_2 \arrow[r, "f_2"]                   & R
\end{tikzcd}
\]

Similarly, if $(Z_1, Y_1, X_2)$ and $(Z_2, Y_2, X_2)$ are graph triples, a \emph{map of graph triples} is a triple
\[
q = (q_Z, q_Y, q_X)\colon (Z_1, Y_1, X_1)\to (Z_2, Y_2, X_2)
\]
of graph maps $q_Z\colon Z_1\to Z_2$, $q_Y\colon Y_1\to Y_2$, $q_X\colon X_1\to X_2$ making the appropriate diagram commute. 

We shall usually assume that our graph pairs and triples and maps of graph pairs and triples are pointed. For graph pairs $(Z, X)$, the basepoint is required to be in $X$. For graph triples $(Z, Y, X)$, the basepoint is required to be in $X\cap Y$.

\begin{remark}
Note that if $(Z, Y, X)$ is a graph triple, then $(Y, X\cap Y)$ and $(X, X\cap Y)$, together with the induced maps, are both graph pairs. 
\end{remark}

\subsection{Invariant graph pairs}

The main objects of study in Feighn--Handel's article \cite{FH99} were ($\psi$-)invariant graph pairs, which we now define.

Let $\F$ be a free group, let $\psi\colon \F\to \F$ be a monomorphism and assume we have identified $\F$ with $\pi_1(R, u)$ for some pointed graph $(R, u)$. Recall that the \emph{mapping torus of $\psi$} is the group:
\[
M(\psi) = \langle \F, t \mid t^{-1}ft = \psi(f), \, f\in \F\rangle.
\]

Let $(Z, X)$ be a graph pair with associated graph map $f_Z\colon Z\to R$. Say the pair $(Z, X)$ is \emph{$\psi$-invariant} (or just invariant) if
\[
Z^{\#} = \langle X^{\#}, \psi(X^{\#})\rangle.
\]
If $H\leqslant M(\psi)$ is a subgroup with $t\in H$, say $(Z, X)$ is an \emph{$\psi$-invariant graph pair for $H$} if $(Z, X)$ is a $\psi$-invariant graph pair and if
\[
H = \langle X^{\#}, t\rangle.
\]

If $H\leqslant M(\psi)$ is a finitely generated subgroup with $t\in H$, say a finite graph pair $(Z, X)$ is \emph{minimal for $H$} if $(Z, X)$ is an invariant graph pair for $H$ and if 
\[
\rr(Z, X)\leqslant \rr(Z', X')
\]
for all finite invariant graph pairs $(Z', X')$ for $H$.

The main technical result in \cite{FH99} states that a finitely generated subgroup $H$ of a mapping torus (that contains $t$) has a particularly nice HNN-extension decomposition which can be read off from a minimal invariant graph pair for $H$. See \cite[Theorem 4.6]{Li25b} for the version stated below.

\begin{theorem}
\label{FHmain}
Let $(Z, X)$ be a finite $\psi$-invariant graph pair for $H$ with $(f_Z)_*$ injective and let $C\leqslant Z^{\#}$ be a free factor so that $Z^{\#} = X^{\#}*C$. The following are equivalent:
\begin{enumerate}
\item\label{itm:min} The pair $(Z, X)$ is minimal.
\item\label{itm:inj} The map
\[
\theta_n\colon\pi_1\left(X\vee \bigvee_{i=0}^n\Gamma(\psi^i(C)), v_Z\right) \to \pi_1(R, v_R)
\]
is injective for all $n\geqslant 0$.
\item\label{itm:pres} We have
\[
H\isom \langle Z^{\#}, t \mid t^{-1}xt = \psi(x), \forall x\in X^{\#}\rangle.
\]
\end{enumerate}
In particular, if any of the above hold, then $\chi(H) = -\rr(Z, X)$.
\end{theorem}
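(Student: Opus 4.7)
The plan is to prove the cyclic implications (\ref{itm:pres}) $\Rightarrow$ (\ref{itm:min}) $\Rightarrow$ (\ref{itm:inj}) $\Rightarrow$ (\ref{itm:pres}), with the Euler characteristic formula $\chi(H) = -\rr(Z,X)$ emerging along the way. \textbf{For (\ref{itm:pres}) $\Rightarrow$ (\ref{itm:min})}, if $H$ admits the HNN-presentation in (\ref{itm:pres}), then standard graph-of-groups arithmetic gives $\chi(H) = \chi(Z^{\#}) - \chi(X^{\#}) = \rk(X^{\#}) - \rk(Z^{\#}) = -\rr(Z, X)$. On the other hand, any invariant graph pair $(Z', X')$ for $H$ yields a surjection from the corresponding HNN-extension onto $H$, giving $\chi(H) \geq -\rr(Z', X')$, whence $\rr(Z, X) \leq \rr(Z', X')$ and minimality follows.

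\textbf{For (\ref{itm:min}) $\Rightarrow$ (\ref{itm:inj})}, I would argue contrapositively. A non-trivial element of $\ker\theta_n$ witnesses a relation among the subgroups $X^{\#}, \psi^0(C), \ldots, \psi^n(C)$ inside $\F$ that obstructs them from freely generating their join. Using Stallings folds on the wedge $X \vee \bigvee_{i=0}^n \Gamma(\psi^i(C))$ and tracking the identified edges back to $Z$, I would extract a proper free factor $C' < C$ with $\rk(C') < \rk(C)$ such that $Z'^{\#} := X^{\#} * C'$ still satisfies the invariance condition $Z'^{\#} = \langle X^{\#}, \psi(X^{\#})\rangle$. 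The resulting invariant graph pair $(Z', X)$ for $H$ has strictly smaller relative rank than $(Z, X)$, contradicting minimality. Producing such a $C'$ while preserving invariance is the main technical obstacle, since the fold must be compatible with how $\psi$ moves $C$ inside $\F$.

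\textbf{For (\ref{itm:inj}) $\Rightarrow$ (\ref{itm:pres})}, the natural map $\phi\colon G' \to H$ from $G' := \group{Z^{\#}, t}{t^{-1}xt=\psi(x),\, x \in X^{\#}}$ is surjective by construction. To prove injectivity, take a Britton-reduced word $w = g_0 t^{\epsilon_1} g_1 \cdots t^{\epsilon_k} g_k \in G'$ with each $g_i \in Z^{\#} = X^{\#} * C$ and assume $\phi(w) = 1$ in $M(\psi)$. Applying the mapping torus relation $t^{-1}ft = \psi(f)$ repeatedly to push the $t$-syllables to the boundary of the word, the image $\phi(w)$ reduces to an expression in the subgroup generated by $X^{\#}$ and the iterates $\psi^i(C)$ for $0 \leq i \leq n$, for some $n$ bounded in terms of the $\epsilon_j$, multiplied by a pure power of $t$. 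Triviality of $\phi(w)$ forces both the $t$-exponent to vanish and the $\F$-part to be trivial; injectivity of $\theta_n$ then forces this $\F$-part to be trivial in the free product $X^{\#} * C * \psi(C) * \cdots * \psi^n(C)$. Britton-reducedness of $w$ propagates these cancellations back to $w = 1$ in $G'$, completing the proof.
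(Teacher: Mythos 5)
The paper does not give its own proof of this statement; it cites it directly from Feighn--Handel \cite{FH99} (see also \cite[Theorem 4.6]{Li25b}). So I can only assess your argument on its own merits, and two of your three implications have genuine problems.

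Your step \eqref{itm:min} $\Rightarrow$ \eqref{itm:inj} cannot work as described. You propose to find a proper free factor $C' < C$ with $\rk(C') < \rk(C)$ so that $Z'^{\#} := X^{\#} * C'$ still satisfies the invariance condition $Z'^{\#} = \langle X^{\#}, \psi(X^{\#})\rangle$. But the right-hand side is $\langle X^{\#}, \psi(X^{\#})\rangle = Z^{\#} = X^{\#} * C$, which is completely determined once $X^{\#}$ is fixed. If $C'$ is a proper free factor of $C$ then $X^{\#} * C' \subsetneq X^{\#} * C$, so the invariance condition forces $C' = C$. In other words, you cannot lower $\rr(Z, X)$ by shrinking $C$ while keeping $X$ unchanged; any genuine reduction must replace $X$ itself. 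This is precisely why Feighn--Handel's tightening procedure (and its variant for triples in \S3 of this paper, culminating in \cref{prop:injective}) operates by folding the whole pair and ``adding loops if necessary,'' thereby modifying both $X$ and $Z$ in a coupled way that preserves invariance. Your sketch correctly identifies that ``producing such a $C'$ while preserving invariance is the main technical obstacle,'' but the form of the putative $C'$ you describe makes the obstacle insurmountable rather than merely technical.

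Your step \eqref{itm:pres} $\Rightarrow$ \eqref{itm:min} also has an unjustified leap. The computation $\chi(H) = -\rr(Z, X)$ from the HNN presentation is fine. But the claim that a surjection $G' \twoheadrightarrow H$ from the HNN-extension of an arbitrary invariant pair $(Z', X')$ gives $\chi(H) \geqslant \chi(G') = -\rr(Z', X')$ is not a general fact about group Euler characteristics, and you give no argument. The kernel $N$ of $G' \to H$ is free (it meets all vertex stabilizers trivially in the Bass--Serre tree), but it may be infinitely generated, in which case the multiplicativity $\chi(G') = \chi(N)\chi(H)$ does not apply and no inequality between $\chi(G')$ and $\chi(H)$ follows formally. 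In Feighn--Handel's treatment, and in the way this paper uses the result (see the last two sentences of the proof of \cref{FHmain2}), the direction \eqref{itm:pres} $\Rightarrow$ \eqref{itm:min} is obtained only after the implication \eqref{itm:min} $\Rightarrow$ \eqref{itm:pres} has been established via tightening: one tightens $(Z', X')$ to a minimal pair $(Z_0, X_0)$ with $\rr(Z_0, X_0) \leqslant \rr(Z', X')$, deduces $\chi(H) = -\rr(Z_0, X_0)$ from the already-proved implication, and then compares. Your cyclic ordering tries to prove \eqref{itm:pres} $\Rightarrow$ \eqref{itm:min} first, which removes the tool that makes the Euler characteristic argument work.

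Your step \eqref{itm:inj} $\Rightarrow$ \eqref{itm:pres} is the soundest of the three: conjugating by a large power of $t$ to push a Britton-reduced word into $\F$, then invoking injectivity of $\theta_n$ together with the free product normal form, is the right strategy, although you should be careful that the boundedness of $n$ comes from the partial sums $s_i = \epsilon_1 + \cdots + \epsilon_i$ rather than from the exponents $\epsilon_j$ individually, and you need the hypothesis $\psi(X^{\#}) \leqslant Z^{\#}$ to control which free factors the conjugates $\psi^{m - s_i}(g_i)$ land in.
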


\subsection{Bi-invariant graph triples}
\label{sec:triples}

We now define the main objects of study of this article, bi-invariant graph triples.

Let $\F$ be a free group, let $\psi\colon \F\to \F$ be an isomorphism and assume we have identified $\F$ with $\pi_1(R, u)$ for some pointed graph $(R, u)$. Then the mapping torus of $\psi$ is the free-by-cyclic group:
\[
M(\psi) \cong \F\rtimes_{\psi}\Z.
\]

Let $(Z, Y, X)$ be a graph triple with associated map $f_Z\colon Z\to R$. Say the triple $(Z, Y, X)$ is \emph{$\psi$-bi-invariant} (or just bi-invariant) if $(Y, X\cap Y)$ is $\psi$-invariant and $(X, X\cap Y)$ is $\psi^{-1}$-invariant. Note that $(Y, X\cap Y)$ being $\psi$-invariant implies that $(Z, X)$ is $\psi$-invariant. Similarly, $(X, X\cap Y)$ being $\psi^{-1}$-invariant implies that $(Z, Y)$ is $\psi^{-1}$-invariant.

\begin{lemma}
\label{iso}
If $(Z, Y, X)$ is a $\psi$-bi-invariant graph triple, then the induced homomorphism
\[
\psi\mid_{X^{\#}}\colon X^{\#} \to Y^{\#}
\]
is an isomorphism.
\end{lemma}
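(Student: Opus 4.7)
The plan is to unpack the two invariance conditions and observe that they fit together to give surjectivity of $\psi|_{X^{\#}}$ onto $Y^{\#}$; injectivity comes for free from $\psi$ being an automorphism of $\F$.

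First, since $\psi \in \aut(\F)$, the restriction of $\psi$ to any subgroup of $\F$ is injective. In particular, $\psi|_{X^{\#}}$ is injective, so the only real content is to show that $\psi(X^{\#}) = Y^{\#}$.

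For surjectivity, I would expand $X^{\#}$ using the hypothesis that $(X, X\cap Y)$ is $\psi^{-1}$-invariant, which by the definition from \cref{sec:triples} gives
\[
X^{\#} = \langle (X\cap Y)^{\#},\, \psi^{-1}((X\cap Y)^{\#})\rangle.
\]
Applying $\psi$ to both sides and using that $\psi$ is a group homomorphism, we get
\[
\psi(X^{\#}) = \langle \psi((X\cap Y)^{\#}),\, (X\cap Y)^{\#}\rangle,
\]
which is precisely $Y^{\#}$ by the hypothesis that $(Y, X\cap Y)$ is $\psi$-invariant. Combining this with injectivity gives the isomorphism.

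I do not anticipate any serious obstacle: the lemma really is just a direct manipulation of the two defining identities. The one small thing to keep in mind is that $(X\cap Y)^{\#}$, $X^{\#}$, and $Y^{\#}$ must all be interpreted as images in $\pi_1(R,u)$ under (the restrictions of) the same map $f_Z$, with compatible basepoint choices; this is exactly why we required the basepoint of a graph triple to lie in $X\cap Y$, so the three fundamental groups genuinely sit inside $\F$ via the same homomorphism and the symbolic calculation above is legitimate.
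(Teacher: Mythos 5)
Your proposal is correct and follows essentially the same argument as the paper: unwind the two invariance conditions to get the defining identities for $X^{\#}$ and $Y^{\#}$ in terms of $(X\cap Y)^{\#}$, apply $\psi$ to the first to deduce $\psi(X^{\#}) = Y^{\#}$, and conclude using that $\psi$ is an automorphism. Your remark about the shared basepoint in $X\cap Y$ making the three images comparable inside $\F$ is a nice point of care but does not change the substance.
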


\begin{proof}
We have that $(Y, X\cap Y)$ is a $\psi$-invariant graph pair and $(X, X\cap Y)$ is a $\psi^{-1}$-invariant graph pair. In particular,
\begin{align*}
Y^{\#} &= \langle (X\cap Y)^{\#}, \psi((X\cap Y)^{\#})\rangle\\
X^{\#} &= \langle (X\cap Y)^{\#}, \psi^{-1}((X\cap Y)^{\#})\rangle.
\end{align*}
Thus, $\psi(X^{\#}) = Y^{\#}$ and $\psi^{-1}(Y^{\#}) = X^{\#}$. Since $\psi$ is an automorphism of $\F$, it follows that $\psi\mid_{X^{\#}}$ is an isomorphism as claimed.
\end{proof}

If $H\leqslant M(\psi)$ is a subgroup with $t\in H$, say $(Z, Y, X)$ is a \emph{$\psi$-bi-invariant graph triple for $H$} if $(Z, Y, X)$ is $\psi$-bi-invariant and if 
\[
H = \langle (X\cap Y)^{\#}, t\rangle. 
\]
Since $Z^{\#} = \langle X^{\#}, Y^{\#}\rangle$, this is equivalent to $(Z, Y, X)$ being $\psi$-bi-invariant and:
\[
\left\langle \bigcup_{i\in\Z} \psi^i(Z^{\#})\right\rangle = H\cap \F.
\]

If $H\leqslant M(\psi)$ is a finitely generated subgroup with $t\in H$, say a finite graph triple $(Z, Y, X)$ is \emph{minimal for $H$} if $(Z, Y, X)$ is a bi-invariant graph triple for $H$ and if 
\[
\rr(Z, X) + \rr(Z, Y) \leqslant \rr(Z', X') + \rr(Z', Y')
\]
for all finite bi-invariant graph triples $(Z', Y', X')$ for $H$. We shall see in \cref{FHmain2} that if $(Z, Y, X)$ is a finite minimal graph triple for $H$, then $(Z, X)$ and $(Z, Y)$ are minimal graph pairs for $H$.

\begin{remark}
If $H\leqslant M(\psi)$ is a subgroup with $t\in H$, there is a subgroup $L\leqslant \F$ so that $H = \langle L, t\rangle$. In particular, the graph triple
\[
(\Gamma(L)\vee\Gamma(\psi(L))\vee\Gamma(\psi^{-1}(L)), \Gamma(L)\vee\Gamma(\psi(L)), \Gamma(L)\vee\Gamma(\psi^{-1}(L))
\]
is a bi-invariant graph triple for $H$. If $H$ is finitely generated, then $L$ can be taken to be finitely generated so that the graph triple above will be finite.
\end{remark}

In \cref{sec:presentations} we shall prove the analogue of \cref{FHmain} for graph triples.

\section{Tightening graph triples}

We fix the same notation as in \cref{sec:triples}. Namely, $\F$ is a free group, $\psi\colon \F\to\F$ is an isomorphism, $(R, u)$ is a pointed graph and there is an identification $\F\cong \pi_1(R, u)$. The aim of this section is to define a version of Feighn--Handel's tightening procedure for graph triples and prove \cref{prop:injective}.

We begin by identifying the various types of folds for a graph triple. There will be more cases than there were for the graph pair case.

\subsection{Types of folds}

Let $(Z, Y, X)$ be a $\psi$-bi-invariant graph triple for a subgroup $H$ with basepoint $v_Z\in X\cap Y$. Let $e_1, e_2\subset Z$ be two oriented edges with common origin which map to the same edge in $R$. Denote by $Z_1$ the graph obtained from $Z$ by folding $e_1$ and $e_2$ and denote by $q\colon Z\to Z_1$ the folding map. Denote by $v, w_1$ and $v, w_2$ the two endpoints of $e_1, e_2$ respectively.

Denote also by $\check{q}_{X}\colon X\to q(X) = X_1$, $\check{q}_Y\colon Y\to q(Y) = Y_1$ and by $\check{q}_{X\cap Y}\colon X\cap Y\to q(X\cap Y)$ the induced maps on $X, Y$ and $X\cap Y$. Just like in \cite{FH99} we use the different notation for these maps to emphasise that $\check{q}_{X}, \check{q}_Y$ and $\check{q}_{X\cap Y}$ may not be folds, even though $q$ is. Moreover, we may also have that $q(X\cap Y)\neq X_1\cap Y_1$.

We will maintain this notation for the remainder of this section.

There are several cases to consider for $q$:
\begin{enumerate}
\item $q$ is an \emph{ordinary fold} if $\check{q}_{X}, \check{q}_{Y}$ are folds or homeomorphisms. If $q$ is ordinary, then say:
\begin{enumerate}
\item $q$ is an \emph{ordinary $X$-fold} if $\check{q}_X$ is a fold.
\item $q$ is an \emph{ordinary $Y$-fold} if $\check{q}_Y$ is a fold.
\end{enumerate}
\item $q$ is a \emph{single exceptional fold} if only one of $\check{q}_{X}, \check{q}_{Y}$ is a fold or a homeomorphism. If $q$ is a single exceptional fold, then say:
\begin{enumerate}
\item $q$ is an \emph{$X$-exceptional fold} if $\check{q}_X$ is a fold or a homeomorphism.
\item $q$ is a \emph{$Y$-exceptional fold} if $\check{q}_Y$ is a fold or a homeomorphism.
\end{enumerate}
\item $q$ is a \emph{double exceptional fold} if neither $\check{q}_{X}$ nor $\check{q}_{Y}$ is a fold or a homeomorphism.
\end{enumerate}

Note that an ordinary fold does not have to be an ordinary $X$-fold or an ordinary $Y$-fold as it can be a homeomorphism on both $X$ and $Y$.

We immediately note the following.

\begin{lemma}
\label{rr_inequality}
We have:
\begin{align*}
\rr(Z_1, X_1) &\leqslant \rr(Z, X)\\
\rr(Z_1, Y_1) &\leqslant \rr(Z, Y).
\end{align*}
\end{lemma}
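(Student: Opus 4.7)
The plan is to express both sides of each inequality as Euler characteristics. Since $(Z,X)$ and $(Z_1, X_1)$ are finite connected graph pairs, the identity recorded in the preliminaries gives
\[
\rr(Z, X) = \chi(X) - \chi(Z) \quad \text{and} \quad \rr(Z_1, X_1) = \chi(X_1) - \chi(Z_1),
\]
so the claimed inequality $\rr(Z_1, X_1) \leqslant \rr(Z, X)$ is equivalent to
\[
\chi(X_1) - \chi(X) \leqslant \chi(Z_1) - \chi(Z).
\]
The same reformulation applies with $Y$ in place of $X$, and the two inequalities will be handled by identical arguments, so I focus on the $X$-case.

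The right hand side I would read off directly from the definition of a fold. The map $q$ removes exactly one edge (identifying $e_2$ with $e_1$) and identifies the vertex $w_2$ with $w_1$ precisely when $w_1 \neq w_2$ in $Z$. Hence $\chi(Z_1) - \chi(Z) = 1$ if $w_1 = w_2$ and equals $0$ otherwise. For the left hand side I would case-split on which of $e_1, e_2$, and which of $w_1, w_2$, lie in $X$. If both edges lie in $X$, then $\check{q}_X$ is itself a fold whose two endpoints coincide iff $w_1 = w_2$ in $Z$, so $\chi(X_1) - \chi(X) = \chi(Z_1) - \chi(Z)$ and equality holds. If at most one of $e_1, e_2$ lies in $X$, no edge of $X$ is collapsed by $\check{q}_X$; the only identification $\check{q}_X$ can perform is pinching $w_1$ with $w_2$, and that only if both lie in $X$ and $w_1 \neq w_2$, contributing $-1$ to $\chi$. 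In every remaining sub-case $\chi(X_1) = \chi(X)$. A short tabulation against the two possible values of $\chi(Z_1) - \chi(Z)$ then confirms the inequality in all cases.

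I do not anticipate any genuine obstacle; the argument is essentially a bookkeeping exercise once the Euler-characteristic reformulation is in hand. The one point that warrants attention is verifying that $X_1 = q(X)$ (respectively $Y_1 = q(Y)$) is a subcomplex of $Z_1$, so that $\chi(X_1)$ is well-defined and the relative rank identity genuinely applies to $(Z_1, X_1)$.
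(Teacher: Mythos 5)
Your Euler-characteristic computation is correct for finite graph pairs. The paper does not give a direct argument at this point; it cites \cite[Lemma 4.5(4)]{FH99} for the finite case and asserts the general case follows similarly, so you have essentially reconstructed a proof that the paper outsources. Two remarks. First, the reformulation $\rr(Z, X) = \chi(X) - \chi(Z)$ presupposes $Z$ and $X$ finite, while the lemma carries no such hypothesis. The paper's identity $\rr(Z, X) = -\chi(Z, X)$, valid whenever $Z - X$ has finitely many cells, lets your case analysis run with relative cell counts instead; but when $Z - X$ is itself infinite the Euler-characteristic route breaks down, and the general case needs a different observation. Second, there is a shorter, hypothesis-free argument that bypasses the casework entirely: a fold $q\colon Z\to Z_1$ is $\pi_1$-surjective and sends $X$ into $X_1$, hence $q_*$ descends to a surjection $\pi_1(Z, v)/\normal{\pi_1(X, v)}\twoheadrightarrow \pi_1(Z_1, q(v))/\normal{\pi_1(X_1, q(v))}$ of free groups, and a free quotient of a free group has rank at most that of the source. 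This gives both inequalities at once with no finiteness assumption. That said, your finer bookkeeping has the virtue of pinning down exactly when the inequality is strict, which is precisely the refinement the paper exploits later in \cref{ordinary_fold_rr}.
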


\begin{proof}
When $Z$ is finite, this is \cite[Lemma 4.5 (4)]{FH99} applied to the $\psi$-invariant graph pair $(Z, X)$ and the $\psi^{-1}$-invariant graph pair $(Z, Y)$. The general case follows by the same reasoning.
\end{proof}

Before analysing the three types of folds, we shall need a general criterion for when $q$ is not a fold or a homeomorphism on a chosen subgraph. The following is immediate from the definitions.

\begin{lemma}
\label{not_fold}
If $A\subset Z$ is a subgraph, then $q\mid_{A}\colon A\to q(A)$ is not a fold or a homeomorphism if and only if the following three conditions hold:
\begin{itemize}
\item $w_1, w_2\in A$.
\item $w_1\neq w_2$.
\item $e_1\not\subset A$ or $e_2\not\subset A$.
\end{itemize}
In particular, if $q\mid_{A}$ is not a fold or a homeomorphism, then it identifies a pair of vertices.
\end{lemma}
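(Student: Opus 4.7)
The plan is to run a case analysis based on which of $e_1, e_2$ lie in the subgraph $A$, using the fundamental observation that the only non-trivial identifications made by $q$ are (a) between the interiors of $e_1$ and $e_2$ and (b) between the endpoints $w_1$ and $w_2$ (the common origin $v$ is already a single point in $Z$).

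For the backward direction, I would assume the three bulleted conditions hold. Since $w_1, w_2 \in A$ are distinct, $q|_A$ sends them to the same vertex of $q(A)$, so $q|_A$ is not injective, and therefore not a homeomorphism. On the other hand, at most one of the edges $e_1, e_2$ lies in $A$, so no pair of oriented edges of $A$ with a common origin is identified by $q|_A$; hence $q|_A$ is not a fold in the sense of the paper.

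For the forward direction, I would proceed by contrapositive, splitting into three cases. If both $e_1, e_2 \subset A$, then $q|_A$ identifies this pair of edges (and nothing else), so it is a fold; this forces condition (iii) to hold whenever $q|_A$ is not a fold. If neither $e_1$ nor $e_2$ lies in $A$, then the only potential identification left is $w_1 \sim w_2$; if at least one of $w_1, w_2$ is absent from $A$ or if $w_1 = w_2$, then $q|_A$ is injective and hence a homeomorphism, so conditions (i) and (ii) are forced. If exactly one of the edges (say $e_1$) lies in $A$, then $w_1 \in A$ automatically by the subgraph convention, and the same argument applied to $w_2$ shows that $q|_A$ is a homeomorphism unless $w_2 \in A$ and $w_1 \neq w_2$. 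The three cases combined give the three conditions.

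The "in particular" clause then follows immediately, since (i) and (ii) together say that $w_1, w_2$ are two distinct vertices of $A$ mapped to the same vertex of $q(A)$. The main (and really only) subtlety is bookkeeping the subgraph convention that an edge in $A$ drags both of its endpoints into $A$; once this is noted, the case analysis is essentially forced by the description of which points $q$ identifies.
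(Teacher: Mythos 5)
Your proposal is correct and is essentially the argument the paper has in mind; the paper states the lemma without explicit proof, calling it ``immediate from the definitions,'' and your case analysis (on which of $e_1,e_2$ lie in $A$, using that the only identifications made by $q$ are $e_1\sim e_2$ and $w_1\sim w_2$, together with the subgraph convention that an edge carries its endpoints) is precisely the bookkeeping that makes it immediate.
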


We now analyse the three cases separately.

\subsection{Ordinary folds}

The case in which $q$ is an ordinary fold is the simplest to handle.

\begin{lemma}
\label{ordinary_fold}
If $q$ is an ordinary fold, then one of the following holds:
\begin{enumerate}
\item $q$ is an ordinary $X$-fold or an ordinary $Y$-fold. In this case $\check{q}_{X\cap Y}$ is a fold or a homeomorphism and $X_1\cap Y_1 = q(X\cap Y)$.
\item\label{itm:ordinary_case2} $w_1, e_1\subset X - X\cap Y$ and $w_2, e_2\subset Y - X\cap Y$ (or vice versa). In this case, $\check{q}_{X\cap Y}$ is a homeomorphism and $X_1\cap Y_1 = q(X\cap Y)\sqcup q(e_1)\sqcup q(w_1)$.
\item $e_1\subset X - X\cap Y$, $e_2\subset Y-X\cap Y$ (or vice versa) and $w_1 = w_2\in X\cap Y$. In this case, $\check{q}_{X\cap Y}$ is a homeomorphism and $X_1\cap Y_1 = q(X\cap Y) \sqcup q(e_1)$.
\end{enumerate}
\end{lemma}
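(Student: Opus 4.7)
Plan: The proof is a case analysis on the positions of $e_1$ and $e_2$ relative to $X$ and $Y$, combined with the criterion from \cref{not_fold}: $q\mid_A$ is a fold precisely when both $e_i \subset A$ with $e_1 \neq e_2$, and otherwise $q\mid_A$ is a homeomorphism precisely when no pair of distinct vertices of $A$ is identified by $q$, i.e., when $w_1 \notin A$, $w_2 \notin A$, or $w_1 = w_2$. I apply this to $A = X,Y,X\cap Y$.

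If both $e_1, e_2 \subset X$, then $\check{q}_X$ is a fold, so $q$ is an ordinary $X$-fold; symmetrically for $Y$. This puts us in Case (1). To verify the auxiliary claims, I would subdivide according to whether both, exactly one, or neither of $e_1, e_2$ lies in $Y$. If both lie in $Y$, then $e_1, e_2 \in X \cap Y$ and $\check{q}_{X \cap Y}$ is itself a fold; otherwise there is no edge identification on $X \cap Y$, and ordinariness of $\check{q}_Y$ rules out a pair $w_1 \neq w_2$ with $w_1, w_2 \in X \cap Y$ (any such pair would obstruct $\check{q}_Y$ from being a fold or homeomorphism), so $\check{q}_{X \cap Y}$ is a homeomorphism. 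For $X_1 \cap Y_1 = q(X \cap Y)$, take $z \in X_1 \cap Y_1$ and write $z = q(x) = q(y)$ with $x \in X$, $y \in Y$. If $x = y$, then $x \in X \cap Y$; if $x \neq y$, then $\{x, y\}$ is one of the identified pairs $\{e_1, e_2\}$ or $\{w_1, w_2\}$, and since $e_1, e_2, w_1, w_2 \in X$ here, the $y$-representative already lies in $X \cap Y$. Either way, $z \in q(X \cap Y)$.

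If neither both edges lie in $X$ nor both in $Y$, I first note that any configuration with an edge in $X \cap Y$ forces both edges into $X$ or into $Y$, reducing to Case (1). Thus, after relabelling, $e_1 \in X - X \cap Y$ and $e_2 \in Y - X \cap Y$. Then neither $\check{q}_X$ nor $\check{q}_Y$ is a fold, so by ordinariness both are homeomorphisms. Since $w_1 \in X$ and $w_2 \in Y$, the homeomorphism conditions become $(w_2 \notin X \text{ or } w_1 = w_2)$ and $(w_1 \notin Y \text{ or } w_1 = w_2)$; conjoining these yields exactly two alternatives: either $w_1 = w_2$, which is then forced into $X \cap Y$ (Case (3)), or $w_1 \neq w_2$ with $w_1 \in X - Y$ and $w_2 \in Y - X$ (Case (2)). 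In both sub-cases $X \cap Y$ meets neither $e_1$ nor $e_2$ and, in Case (3), no vertex pair is identified by $q$, so $\check{q}_{X \cap Y}$ is a homeomorphism. The description of $X_1 \cap Y_1$ then follows by enumerating the identified pairs that straddle $X$ and $Y$: both $\{e_1, e_2\}$ and $\{w_1, w_2\}$ contribute in Case (2), giving the disjoint union $q(X \cap Y) \sqcup q(e_1) \sqcup q(w_1)$, while only $\{e_1, e_2\}$ contributes in Case (3), giving $q(X \cap Y) \sqcup q(e_1)$.

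The argument is essentially combinatorial bookkeeping. The most delicate step is the last part of Case (1): one has to confirm that ordinariness on the "non-fold side" of $q$ is exactly what prevents an identified vertex pair from producing a new point of $X_1 \cap Y_1$ lying outside $q(X \cap Y)$.
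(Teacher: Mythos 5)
Your proof is correct and follows essentially the same approach as the paper: a case analysis on the positions of $e_1, e_2$ relative to $X$ and $Y$ driven by \cref{not_fold}. In fact your treatment of the $w_1\neq w_2$ alternative is cleaner than the paper's, which contains a garbled sentence at that point (it says ``either $w_1,w_2\in X\cap Y$'' where the conclusion should be $w_1\in X-X\cap Y$, $w_2\in Y-X\cap Y$, exactly as you derive); the paper's lemma statement is nonetheless what you prove. One small caveat shared by both proofs: the exhaustiveness of the trichotomy uses that every edge of $Z$ lies in $X$ or in $Y$ (i.e.\ $Z=X\cup Y$), an assumption the paper does not state explicitly in the definition of graph triple but uses throughout, and which you also implicitly invoke when you say a non--Case~(1) configuration forces $e_1\subset X-X\cap Y$, $e_2\subset Y-X\cap Y$ up to relabelling.
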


\begin{proof}
If $q$ is an $X$-fold and a $Y$-fold, then $q\mid X\cap Y$ is also a fold and so $e_1, e_2\subset X\cap Y$ and $q(X\cap Y) = X_1\cap Y_1)$. If $q$ is an $X$-fold, but not a $Y$-fold, then $e_1, e_2\subset X$ and either $w_1 = w_2$ or $w_1\neq w_2$ and $w_1, w_2$ are not both in $Y$ by \cref{not_fold}. In particular, $q(X\cap Y) = X_1\cap Y_1$. The case in which $q$ is a $Y$-fold, but not an $X$-fold is handled similarly. This covers the first case.

Now suppose that $q$ is neither an $X$-fold or a $Y$-fold. Then certainly $e_1\subset X - X\cap Y$ and $e_2\subset Y - X\cap Y$ (or vice versa). Since $q$ is ordinary, $\check{q}_X, \check{q}_Y$ must be homeomorphism. \cref{not_fold} then implies that either $w_1, w_2\in X\cap Y$. Thus, the two remaining cases correspond to whether $w_1 \neq w_2$ or $w_1= w_2$. In the first case we have that $X_1\cap Y_1$ consists of $q(X\cap Y)$ and $q(e_1) \sqcup q(w_1)$ while in the second case we have that $X_1\cap Y_1$ consists of $q(X\cap Y)$ and $q(e_1)$.
\end{proof}

\begin{lemma}
\label{ordinary_fold_rr}
If $q$ is an ordinary fold, then $(Z_1, Y_1, X_1)$ is $\psi$-bi-invariant for $H$ and
\begin{align*}
\rr(Z_1, Y_1)&\leqslant \rr(Z, X),\\
\rr(Z_1, X_1)&\leqslant \rr(Z, Y).
\end{align*}
Moreover if $\rr(Z, X), \rr(Z, Y)<\infty$, then
\[
\rr(Z_1, Y_1) + \rr(Z_1, X_1)<\rr(Z, Y) + \rr(Z, X)
\]
if and only if $e_1\cup e_2$ is a bigon not in $X\cap Y$.
\end{lemma}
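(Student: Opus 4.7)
The plan is to split into the three cases of \cref{ordinary_fold} and combine general facts about folds with a short Euler-characteristic calculation.

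\textbf{Bi-invariance.} Since folds are surjective on $\pi_1$, I would first note that $Z_1^{\#} = Z^{\#}$, $X_1^{\#} = X^{\#}$, $Y_1^{\#} = Y^{\#}$, and $q(X \cap Y)^{\#} = (X \cap Y)^{\#}$. \cref{ordinary_fold} gives $q(X \cap Y) \subseteq X_1 \cap Y_1$ in all three cases, so $(X_1 \cap Y_1)^{\#} \supseteq (X \cap Y)^{\#}$. On the other hand $(X_1 \cap Y_1)^{\#} \subseteq X_1^{\#} \cap Y_1^{\#} = X^{\#} \cap Y^{\#}$, and by \cref{iso} the automorphism $\psi$ maps $X^{\#}$ isomorphically onto $Y^{\#}$. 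Combining these,
\[
\langle (X_1 \cap Y_1)^{\#}, \psi((X_1 \cap Y_1)^{\#}) \rangle = \langle (X \cap Y)^{\#}, \psi((X \cap Y)^{\#}) \rangle = Y^{\#} = Y_1^{\#},
\]
so $(Y_1, X_1 \cap Y_1)$ is $\psi$-invariant. The symmetric argument using $\psi^{-1}$ shows that $(X_1, X_1 \cap Y_1)$ is $\psi^{-1}$-invariant, and the equality $H = \langle (X_1 \cap Y_1)^{\#}, t \rangle$ is immediate from the sandwich $(X \cap Y)^{\#} \subseteq (X_1 \cap Y_1)^{\#} \subseteq X^{\#} \subseteq H \cap \F$.

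\textbf{Rank inequalities and strict decrease.} Because $\psi$ restricts to an isomorphism $X^{\#} \to Y^{\#}$ by \cref{iso}, we have $\rk(X^{\#}) = \rk(Y^{\#})$ and hence $\rr(Z, X) = \rr(Z, Y)$ (and likewise $\rr(Z_1, X_1) = \rr(Z_1, Y_1)$ once bi-invariance is established). The two stated crossed inequalities therefore reduce to the non-crossed inequalities already given in \cref{rr_inequality}. For the strict-decrease clause, I would work with $\rr = -\chi$ in the finite case: a non-bigon fold identifies $w_1 \neq w_2$ together with the two edges, preserving $\chi$ on every subgraph on which the fold acts as a fold or homeomorphism, so neither relative rank changes. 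A bigon fold removes one edge and no vertices, and $\chi$ of a subgraph increases by exactly $1$ iff both $e_1, e_2$ lie in that subgraph. Running through the sub-cases — the bigon lies entirely in $X \cap Y$ (neither relative rank changes), in exactly one of $X$ or $Y$ (exactly one relative rank drops by one), or splits between $X \setminus Y$ and $Y \setminus X$ (both drop by one) — one sees that the sum strictly decreases if and only if the bigon is not contained in $X \cap Y$.

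The main obstacle will be keeping the bigon sub-case bookkeeping straight, in particular checking that in Cases 2 and 3 of \cref{ordinary_fold} the only bigon configuration that can arise is the split one (with $e_1 \in X \setminus Y$ and $e_2 \in Y \setminus X$), since a bigon with both $e_1, e_2$ in $X$ or both in $Y$ forces it into Case 1.
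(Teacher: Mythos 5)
Your bi-invariance argument is correct and is actually a nice simplification of the paper's case analysis: the paper handles the three cases of \cref{ordinary_fold} separately (and in the third case explicitly constructs an element $\delta_X \in X^{\#} \cap Y^{\#}$), whereas your sandwich $(X\cap Y)^{\#} \subseteq (X_1\cap Y_1)^{\#} \subseteq X^{\#}\cap Y^{\#}$ combined with \cref{iso} handles all three at once. The Euler-characteristic bookkeeping for the strict-decrease clause is also essentially what the paper does, and is correct (your list of bigon subcases omits the possibility that one or both edges of the bigon lie outside $X\cup Y$ entirely, but those cases only make the drop larger, so the conclusion is unaffected).

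The genuine gap is the claim that \cref{iso} gives $\rr(Z,X)=\rr(Z,Y)$. The relative rank $\rr(Z,X)$ is by definition $\rk\bigl(\pi_1(Z,v)/\normal{\pi_1(X,v)}\bigr)$, computed from the fundamental groups of the \emph{graphs} $Z$ and $X$, not from the image subgroups $Z^{\#}$ and $X^{\#}$. \cref{iso} only tells you $\rk(X^{\#})=\rk(Y^{\#})$; without an injectivity hypothesis on $(f_Z)_*$ (which \cref{FHmain2} imposes but this lemma does not) you cannot pass from that to $\rk(\pi_1(X))=\rk(\pi_1(Y))$, and hence not to $\rr(Z,X)=\rr(Z,Y)$. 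So your reduction of the crossed inequalities to \cref{rr_inequality} doesn't go through as written. For what it's worth, the paper's own proof also just cites \cref{rr_inequality} for "the desired bounds," so the discrepancy between the crossed form in this statement and the uncrossed form in \cref{rr_inequality}, \cref{lem:case1}, \cref{double_fold_fix} and \cref{summary} is already present in the source; but whichever way one resolves that, the inference from $\rk(X^{\#})=\rk(Y^{\#})$ to $\rr(Z,X)=\rr(Z,Y)$ is not valid in the generality of this lemma and should not be used.
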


\begin{proof}
By definition of ordinary folds, we have $Z^{\#} = Z_1^{\#}$ and $X^{\#} = X_1^{\#}, Y^{\#} = Y_1^{\#}$. In the first two cases of \cref{ordinary_fold}, we also have $(X\cap Y)^{\#} = (X_1\cap Y_1)^{\#}$ and so in these cases $(Z_1, Y_1, X_1)$ is $\psi$-bi-invariant since $(Z, Y, X)$ was. Now suppose that we are in the third case of \cref{ordinary_fold}; that is, $e_1\subset X - X\cap Y$, $e_2\subset Y - X\cap Y$ (or vice versa) and $w_1 = w_2$. Let $\alpha, \beta\colon I\to X\cap Y$ be paths connecting the basepoint with $v, w_1 = w_2$ respectively. Then denote by $\delta_X = f_Z\circ(\alpha*e_1*\overline{\beta})$ and $\delta_Y = f_Z\circ(\alpha*e_2*\overline{\beta})$. We have that $[\delta_X] = [\delta_Y]$ and $(X_1\cap Y_1)^{\#} = \langle (X\cap Y)^{\#}, [\delta_X]\rangle$. Since $[\delta_X]\in X^{\#}$, $[\delta_Y]\in Y^{\#}$, we see that $\psi([\delta_X])\in Y^{\#}$ and $\psi^{-1}([\delta_X])\in X^{\#}$ and so $(Z_1, Y_1, X_1)$ is also $\psi$-bi-invariant in this case.

\cref{rr_inequality} gives us the desired bounds on reduced ranks. 

Now suppose that $\rr(Z, X), \rr(Z, Y)<\infty$. Using the equality $\rr(Z, X) = -\chi(Z, X)$, the only way that we can have $\rr(Z_1, X_1)<\rr(Z, X)$ is if $Z_1 - X_1$ is obtained from $Z - X$ by removing an edge. This only can happen if $e_1, e_2\subset Z - X$ and $w_1 = w_2$ or if $e_1\subset X - X\cap Y$, $e_2\subset Y$ (or vice versa). By \cref{ordinary_fold}, the latter case can only happen if $w_1 = w_2$ and so $q$ folds a bigon not in $X\cap Y$. The case in which $\rr(Z_1, Y_1)<\rr(Z, Y)$ is handled similarly.
\end{proof}

\subsection{Single exceptional folds}

When $q$ is a single exceptional fold, the new graph triple $(Z_1, Y_1, X_1)$ may no longer be $\psi$-bi-invariant. This will be fixed in the subsequent lemmas. First, we identify precisely when a fold is a single exceptional fold.

\begin{lemma}
\label{single_fold}
If $q$ is a single exceptional fold, then, up to exchanging $e_1$ and $e_2$, one of the following holds:
\begin{enumerate}
\item $q$ is a single $X$-exceptional fold. In this case, either
\begin{enumerate}
\item\label{itm:case1a} $\check{q}_X$ is a fold and so $\check{q}_{X\cap Y}$ identifies a pair of vertices and $X_1\cap Y_1 = q(X\cap Y)$, or
\item\label{itm:case1b} $\check{q}_X$ is a homeomorphism and so $e_1\subset X - X\cap Y$, $e_2\subset Y - X\cap Y$, $w_1\in X\cap Y$, $w_2\in Y-X\cap Y$ and $X_1\cap Y_1 = q(X\cap Y) \sqcup q(e_1)$.
\end{enumerate}
\item $q$ is a single $Y$-exceptional fold. In this case, either
\begin{enumerate}
\item\label{itm:case2a} $\check{q}_Y$ is a fold and so $\check{q}_{X\cap Y}$ identifies a pair of vertices and $X_1\cap Y_1 = q(X\cap Y)$, or
\item\label{itm:case2b} $\check{q}_Y$ is a homeomorphism and so $e_1\subset X - X\cap Y$, $e_2\subset Y - X\cap Y$, $w_1\in X - X\cap Y$, $w_2\in X\cap Y$ and $X_1\cap Y_1 = q(X\cap Y) \sqcup q(e_1)$.
\end{enumerate}
\end{enumerate}
\end{lemma}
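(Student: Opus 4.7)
The plan is a case analysis driven by repeated use of \cref{not_fold}. Since $q$ is a single exceptional fold, exactly one of $\check{q}_X, \check{q}_Y$ is a fold or a homeomorphism; by the symmetric roles that $X$ and $Y$ play, it suffices to prove case (1), where $\check{q}_X$ is the fold or homeomorphism and $\check{q}_Y$ is neither. An immediate application of \cref{not_fold} to the subgraph $Y$ then yields $w_1, w_2 \in Y$, $w_1 \neq w_2$, and $e_i \not\subset Y$ for at least one $i$. I would then split further on whether $\check{q}_X$ is a fold (case (1a)) or a homeomorphism (case (1b)).

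For case (1a), $\check{q}_X$ being a fold forces $e_1, e_2 \subset X$; combined with $w_1, w_2 \in Y$ this places $w_1, w_2 \in X \cap Y$, and the failure of $e_i \subset Y$ for some $i$ then forces that same $e_i$ outside $X \cap Y$. A second application of \cref{not_fold}, now to $X \cap Y$, shows $\check{q}_{X \cap Y}$ is not a fold or homeomorphism; since a fold on $X \cap Y$ would require both $e_1, e_2$ to lie in $X \cap Y$, the map $\check{q}_{X \cap Y}$ must identify only the pair of vertices $\{w_1, w_2\}$. To compute $X_1 \cap Y_1$ I would observe that the only identifications performed by $q$ are $e_1 \sim e_2$ and $w_1 \sim w_2$, so any point of $q(X) \cap q(Y)$ lying outside $q(X \cap Y)$ would have to arise from pairing a piece of $e_1$ in $X$ with a piece of $e_2$ in $Y$, or vice versa; since both edges already lie in $X$, any $Y$-side contribution forces some $e_i$ to lie in $Y$, hence in $X \cap Y$, and so is absorbed into $q(X \cap Y)$. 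This gives $X_1 \cap Y_1 = q(X \cap Y)$.

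For case (1b), $\check{q}_X$ being a homeomorphism means $q$ performs no identifications within $X$, and in particular $w_1, w_2$ cannot both lie in $X$. After possibly exchanging $e_1$ and $e_2$, we may assume $w_1 \in X$ and $w_2 \notin X$, giving $w_1 \in X \cap Y$ and $w_2 \in Y - (X \cap Y)$. Then $e_2 \not\subset X$, since its endpoint $w_2$ is absent, while $v, w_1 \in X$ combined with the way $q$ is performed gives $e_1 \subset X$. The constraint that $e_i \not\subset Y$ for some $i$, combined with the choice of exchange, then pins down $e_1 \not\subset Y$ and $e_2 \subset Y$, yielding the configuration $e_1 \subset X - (X \cap Y)$ and $e_2 \subset Y - (X \cap Y)$. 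Tracing through $q$ once more gives $X_1 \cap Y_1 = q(X \cap Y) \sqcup q(e_1)$: the single new edge $q(e_1) = q(e_2)$ is contributed by $e_1 \subset X$ on the $X$-side and $e_2 \subset Y$ on the $Y$-side, meeting $q(X \cap Y)$ only at the shared vertex $q(w_1) = q(w_2)$.

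The main obstacle is the careful bookkeeping in case (1b): correctly orienting the exchange of $e_1, e_2$ so that all four containment relations on edges and endpoints hold simultaneously, and verifying that $q(e_1)$ contributes a genuinely new edge disjoint from $q(X \cap Y)$ apart from shared vertices. Cases (2a) and (2b) then follow from (1a) and (1b) by interchanging the roles of $X$ and $Y$ throughout, an operation that preserves bi-invariance by \cref{iso} and the symmetric formulation of the definition in \cref{sec:triples} (equivalently, replacing $\psi$ by $\psi^{-1}$).
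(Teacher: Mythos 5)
Your proof is correct and follows the same case analysis via \cref{not_fold} that the paper uses. The one slightly loose step is in case \eqref{itm:case1b}: the deduction $e_1\subset X$ should come from \cref{not_fold} applied to $Y$ (some $e_i\not\subset Y$, hence $e_i\subset X$ since implicitly $Z=X\cup Y$), rather than from asserting $v,w_1\in X$ first---indeed $v\in X$ is itself a consequence of this---but once the order is fixed your configuration and the computation of $X_1\cap Y_1$ go through exactly as the paper intends.
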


\begin{proof}
The four subcases are mutually exclusive and cover all possibilities by definition of single exceptional folds.

By \cref{not_fold}, if $\check{q}_Y$ is not a fold or a homeomorphism, then $w_1\neq w_2$, $w_1, w_2\in Y$ and $e_1\subset X - X\cap Y$ or $e_2\subset X - X\cap Y$. 

If additionally, $\check{q}_X$ is a fold, then we must have $e_1, e_2\subset X$. In this case, $\check{q}_{X\cap Y}$ identifies a pair of vertices and $X_1\cap Y_1 = q(X\cap Y)$. This is Case \eqref{itm:case1a}. 

If instead $\check{q}_X$ is a homeomorphism, then $e_1\subset X - X\cap Y$ and $e_2\subset Y - X\cap Y$ (or vice versa) and $X_1\cap Y_1$ is obtained from $q(X\cap Y)$ by adjoining the edge $q(e_1)$. This is Case \eqref{itm:case1b}.

Cases \eqref{itm:case2a} and \eqref{itm:case2b} are handled similarly.
\end{proof}

In the next lemma we show how to fix $\psi$-bi-invariance of the triple $(Z_1, Y_1, X_1)$ when $q$ is a (single) $X$-exceptional fold. The case of a $Y$-exceptional fold is handled in the exact same way by exchanging the roles of $X$ and $Y$ and replacing $\psi$ with $\psi^{-1}$.

\begin{lemma}
\label{lem:case1}
Suppose that $q$ is a single $X$-exceptional fold.
\begin{itemize}
\item If $q$ is as in Case \eqref{itm:case1a} from \cref{single_fold}, let $\alpha, \beta\colon I \to X\cap Y$ be an immersed path connecting $v_Z$ with $w_1, w_2$ respectively and denote by $\delta = [(f_Z\circ\alpha)*(f_Z\circ \overline{\beta})] \in \F$. 
\item If $q$ is as in Case (\ref{itm:case1b}) from \cref{single_fold}, let $\alpha, \beta\colon I\to X\cap Y$ be immersed paths connecting $v_Z$ with $w_1$ and $v$ respectively and denote by $\delta = [(f_Z\circ\alpha)*(f_Z\circ\overline{e_1})*(f_Z\circ\overline{\beta})]\in \F$. 
\end{itemize}
If we put
\begin{align*}
X_2 &= \begin{cases}
		X_1 \quad &\text{ if $\psi^{-1}(\delta)\in X^{\#}$}\\
		X_1 \vee \Gamma(\psi^{-1}(\delta)) \quad &\text{ otherwise}
	\end{cases}\\
Y_2 &= Y_1\\
Z_2 &= X_2 \cup Y_2
\end{align*}
then $(Z_2, Y_2, X_2)$ is a $\psi$-bi-invariant graph triple for $H$ and
\begin{align*}
\rr(Z_2, X_2) &\leqslant \rr(Z, X)\\
\rr(Z_2, Y_2) &\leqslant \rr(Z, Y).
\end{align*}
If $\psi^{-1}(\delta)\in X^{\#}$ and $\rr(Z, Y)<\infty$, then $\rr(Z_2, Y_2)<\rr(Z, Y)$. 
\end{lemma}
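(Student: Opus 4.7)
The strategy hinges on a single key observation: in both subcases \eqref{itm:case1a} and \eqref{itm:case1b} of \cref{single_fold} one has $\delta \in X^{\#}$. In \eqref{itm:case1a} this is because $\check{q}_X$ is a fold on $X$ and therefore identifies $w_1$ with $w_2$, so $\alpha\cdot\overline{\beta}$ becomes a loop in $X_1$; combined with $X_1^{\#} = X^{\#}$ this gives $\delta \in X^{\#}$. In \eqref{itm:case1b} the path $\alpha\cdot\overline{e_1}\cdot\overline{\beta}$ is already a loop in $X$, since $\alpha,\beta \subset X\cap Y \subset X$ and $e_1\subset X$. Applying \cref{iso} then yields $\psi(\delta)\in \psi(X^{\#}) = Y^{\#}$ automatically, so the only potential obstruction to the bi-invariance of $(Z_1, Y_1, X_1)$ is membership $\psi^{-1}(\delta)\in X^{\#}$; this is precisely what the two cases in the definition of $X_2$ are designed to guarantee.

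With this in hand, I would first compute the $\#$-groups of the pieces. In either subcase the restriction $\check{q}_X$ is a fold or a homeomorphism, so $X_1^{\#} = X^{\#}$. In either subcase at most one of $e_1, e_2$ lies in $Y$, so $\check{q}_Y$ folds no edge of $Y$ but identifies the two $Y$-vertices $w_1$ and $w_2$, producing a single new loop whose image in $\F$ is $\delta$; hence $Y_1^{\#} = \langle Y^{\#}, \delta\rangle$. Reading off $X_1\cap Y_1$ from \cref{single_fold} (either $q(X\cap Y)$ with two vertices identified, or $q(X\cap Y)\sqcup q(e_1)$), the same new loop gives $(X_1\cap Y_1)^{\#} = \langle (X\cap Y)^{\#}, \delta\rangle$. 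Since $\Gamma(\psi^{-1}(\delta))$ is wedged onto $X_1$ at the basepoint, it meets $Y_2 = Y_1$ only at $v_Z$, so $X_2\cap Y_2 = X_1\cap Y_1$ in both definitions of $X_2$.

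The bi-invariance of $(Z_2, Y_2, X_2)$ for $H$ now falls out directly. The $\psi$-invariance of $(Y_2, X_2\cap Y_2)$ reduces to $\psi(\delta)\in\langle Y^{\#},\delta\rangle$, which holds by the key observation. The $\psi^{-1}$-invariance of $(X_2, X_2\cap Y_2)$ reduces to $\psi^{-1}(\delta)\in X_2^{\#}$: if $\psi^{-1}(\delta)\in X^{\#}$ then $X_2 = X_1$ and this is immediate, while otherwise the wedged graph $\Gamma(\psi^{-1}(\delta))$ places $\psi^{-1}(\delta)$ into $X_2^{\#}$ by construction. Finally, $H = \langle (X_2\cap Y_2)^{\#}, t\rangle$ reduces to $\delta\in H$, which follows from $\delta\in X^{\#} \subseteq \langle (X\cap Y)^{\#}, \psi^{-1}((X\cap Y)^{\#})\rangle \subseteq H$ using the original $\psi^{-1}$-invariance of $(X, X\cap Y)$.

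The rank bounds are a short Euler-characteristic computation. In either subcase the fold gives $\chi(Z_1) = \chi(Z)$ and $\chi(X_1) = \chi(X)$, while $\chi(Y_1) = \chi(Y) - 1$ because $\check{q}_Y$ identifies one pair of vertices and folds no edges; so $\rr(Z_1, X_1) = \rr(Z, X)$ and, when $\rr(Z, Y) < \infty$, $\rr(Z_1, Y_1) = \rr(Z, Y) - 1$. When $X_2 = X_1$ these two equalities already yield both required bounds and the claimed strict decrease. When instead $X_2 = X_1 \vee \Gamma(\psi^{-1}(\delta))$, wedging a cyclic-subgroup graph (Euler characteristic $0$) at the basepoint drops $\chi(Z_2)$ and $\chi(X_2)$ each by one, so $\chi(Z_2, X_2) = \chi(Z_1, X_1)$ but $\chi(Z_2, Y_2) = \chi(Z_1, Y_1) - 1$, recovering the non-strict bound $\rr(Z_2, Y_2) = \rr(Z, Y)$. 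The main bookkeeping obstacle is keeping straight which of $e_1, e_2, w_1, w_2$ lie in each of $X$, $Y$, and $X\cap Y$ within the two subcases of \cref{single_fold}, since this governs both the $\#$-group computations and the Euler-characteristic counts.
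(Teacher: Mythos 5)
Your proof is correct and follows essentially the same route as the paper: compute $(X_1\cap Y_1)^{\#}=\langle(X\cap Y)^{\#},\delta\rangle$, verify $\psi$- and $\psi^{-1}$-invariance of the new triple, and run an Euler-characteristic count for the relative ranks. The main thing you make explicit that the paper leaves implicit is the observation $\delta\in X^{\#}$ (the paper gestures at this via ``$\check{q}_X$ is a fold or homeomorphism''), and this is indeed the hinge of the whole argument: it is what gives $\psi(\delta)\in Y^{\#}$ automatically, reduces the $\psi^{-1}$-invariance check to the single condition $\psi^{-1}(\delta)\in X_2^{\#}$, and shows $\delta\in H$. Your case-by-case bookkeeping of which of $e_1,e_2,w_1,w_2$ lie in $X$, $Y$, $X\cap Y$ is accurate, and the Euler-characteristic computation $\chi(Z_1)=\chi(Z)$, $\chi(X_1)=\chi(X)$, $\chi(Y_1)=\chi(Y)-1$ correctly yields $\rr(Z_1,Y_1)=\rr(Z,Y)-1$ when finite, from which both the non-strict bound and the strict decrease in the good case follow.

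One small imprecision: when you write that $\psi^{-1}$-invariance of $(X_2,X_2\cap Y_2)$ ``reduces to $\psi^{-1}(\delta)\in X_2^{\#}$,'' that condition only gives the containment $\langle(X_2\cap Y_2)^{\#},\psi^{-1}((X_2\cap Y_2)^{\#})\rangle\leqslant X_2^{\#}$; the reverse containment uses the original $\psi^{-1}$-invariance of $(X,X\cap Y)$ together with $\psi^{-1}(\delta)\in\psi^{-1}((X_2\cap Y_2)^{\#})$. The ingredients are all present in your argument, so this is a matter of phrasing rather than a gap.
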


\begin{proof}
By \cref{single_fold} we have that $(X_1\cap Y_1)^{\#} = \langle (X\cap Y)^{\#}, \delta\rangle$. Hence, since $X_2\cap Y_2 = X_1\cap Y_1$, we have $\psi^{-1}((X_2\cap Y_2)^{\#})\leqslant\psi^{-1}(Y_2^{\#})\leqslant X_2^{\#}$. Thus, $(X_2, X_2\cap Y_2)$ and $(Z_2, Y_2)$ are $\psi^{-1}$-invariant. Moreover, since $\check{q}_X$ is a fold or a homeomorphism and $(Z, X)$ was $\psi$-invariant, it also follows that $\psi((X_2\cap Y_2)^{\#})\leqslant\psi(X_2^{\#})\leqslant Y_2^{\#}$ and so $(Y_2, X_2\cap Y_2)$ and $(Z_2, X_2)$ are $\psi$-invariant. Hence, $(Z_2, Y_2, X_2)$ is $\psi$-bi-invariant.

By \cref{rr_inequality}, we have that $\rr(Z_2, X_2) = \rr(Z_1, X_1)\leqslant \rr(Z, X)$. Since $q$ is an $X$-exceptional fold, we have that $Y_1$ is obtained from $Y$ by identifying two vertices and so $Z_1 - Y_1$ is obtained from $Z - Y$ by removing an edge. Thus, $\rr(Z_2, Y_2)\leqslant \rr(Z_1, Y_1) + 1 = \rr(Z, X)$. By construction, the inequality is strict precisely when $\psi^{-1}(\delta)\in X^{\#}$.
\end{proof}

\subsection{Double exceptional folds}

Finally we analyse the double exceptional folds case. Just as in the case of a single exceptional fold, the graph triple $(Z_1, Y_1, X_1)$ may no longer be $\psi$-bi-invariant.

\begin{lemma}
\label{double_fold}
If $q$ is a double exceptional fold, then $w_1\neq w_2$, $e_1\subset X - Y\cap X$, $e_2\subset Y - X\cap Y$ and $w_1, w_2\in X\cap Y$ (or vice versa). Moreover, $\check{q}_{X\cap Y}$ is not a fold or a homeomorphism and $X_1\cap Y_1 - q(X\cap Y) = q(e_1)$.
\end{lemma}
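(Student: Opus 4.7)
The plan is to derive every claim as a direct consequence of \cref{not_fold}, applied successively to the subgraphs $X$, $Y$, and $X\cap Y$, together with a short case check on where the two folded edges live.

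Since $q$ is double exceptional, \cref{not_fold} applies to both $X$ and $Y$. Applying it to $X$ gives $w_1, w_2\in X$, $w_1\neq w_2$, and that at least one of $e_1, e_2$ is not contained in $X$; applying it to $Y$ gives the analogous statement with $Y$ in place of $X$. Combining, I obtain $w_1, w_2\in X\cap Y$ with $w_1\neq w_2$. Because $Z = X\cup Y$, no single edge $e_i$ can fail to be contained in both $X$ and $Y$, so the edge witnessing $\check{q}_X$'s failure must differ from the edge witnessing $\check{q}_Y$'s failure. Up to exchanging $e_1$ and $e_2$, this forces $e_1\subset X - X\cap Y$ and $e_2\subset Y - X\cap Y$, which is exactly the ``or vice versa'' alternative in the conclusion.

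For the moreover part, the data $w_1, w_2\in X\cap Y$, $w_1\neq w_2$, and $e_1\not\subset X\cap Y$ plugs straight into \cref{not_fold} to conclude that $\check{q}_{X\cap Y}$ is not a fold or a homeomorphism. For the final identity $X_1\cap Y_1 - q(X\cap Y) = q(e_1)$, I would observe that $q$ is injective away from the pair $\{e_1, e_2\}\cup\{w_1, w_2\}$, so any cell of $X_1\cap Y_1$ outside $q(X\cap Y)$ must arise from an identification $q$ performs. The identified vertex $q(w_1) = q(w_2)$ already lies in $q(X\cap Y)$, while the 1-cell $q(e_1) = q(e_2)$ lies in $q(X)\cap q(Y) = X_1\cap Y_1$ but not in $q(X\cap Y)$, since neither $e_1$ nor $e_2$ is contained in $X\cap Y$.

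I do not expect any serious obstacle here: the whole statement is a bookkeeping consequence of \cref{not_fold} once one uses $Z = X\cup Y$ to force the two witnessing edges to be distinct. The only mildly subtle point is the last equality, where the argument must rule out further cells of $X_1\cap Y_1$ escaping $q(X\cap Y)$; this is handled by the observation that $q$ is injective outside the folded pair, so no additional coincidences between $q(X)$ and $q(Y)$ can occur.
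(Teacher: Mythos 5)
Your proof is correct and takes essentially the same route as the paper, which simply invokes \cref{not_fold} with $A = X$ and $A = Y$; you spell out the bookkeeping (including the implicit assumption $Z = X\cup Y$, which the paper uses tacitly throughout this section) that the paper leaves to the reader. Nothing to fix.
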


\begin{proof}
This follows from applying \cref{not_fold} with $A = X, Y$.
\end{proof}

\begin{lemma}
\label{double_fold_fix}
Suppose that $q$ is a double exceptional fold. Let $\alpha, \beta\colon I\to X\cap Y$ be immersed paths connecting $v_Z$ with $w_1$ and $w_2$ respectively and let $\delta = [(f_Z\circ\alpha)*(f_Z\circ\beta)]\in Z^{\#}\leqslant \F$. If 
\begin{align*}
X_2 &= \begin{cases}
		X_1 \quad &\text{ if $\psi^{-1}(\delta)\in X^{\#}$}\\
		X_1 \vee \Gamma(\psi^{-1}(\delta)) \quad &\text{ otherwise}
	\end{cases}\\
Y_2 &= \begin{cases}
		Y_1 \quad &\text{ if $\psi(\delta)\in Y^{\#}$}\\
		Y_1 \vee \Gamma(\psi(\delta)) \quad &\text{ otherwise}
	\end{cases}\\
Z_2 &= X_2 \cup Y_2
\end{align*}
then $(Z_2, X_2, Y_2)$ is $\psi$-bi-invariant for $H$ and
\begin{align*}
\rr(Z_2, Y_2)&\leqslant \rr(Z, Y),\\
\rr(Z_2, X_2)&\leqslant \rr(Z, X).
\end{align*}

If $\psi^{-1}(\delta)\in X^{\#}$ and $\rr(Z, Y)<\infty$, then $\rr(Z_2, Y_2)<\rr(Z, Y)$. 

If $\psi(\delta)\in Y^{\#}$ and $\rr(Z, X)<\infty$, then $\rr(Z_2, X_2)<\rr(Z, X)$.
\end{lemma}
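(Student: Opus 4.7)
The plan is to verify $\psi$-bi-invariance of $(Z_2, Y_2, X_2)$ for $H$ by analysing the generators of $(X_1\cap Y_1)^{\#}$ carefully, and then to deduce the rank inequalities via Euler characteristic bookkeeping.

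First I would determine $(X_1\cap Y_1)^{\#}$. By \cref{double_fold}, the fold $q$ identifies the two distinct vertices $w_1,w_2\in X\cap Y$ and glues $e_1$ to $e_2$, creating a new edge in $X_1\cap Y_1$ with endpoints $q(v)$ and $q(w_1)=q(w_2)$. Thus $\pi_1(X_1\cap Y_1)$ gains two generators beyond $\pi_1(X\cap Y)$: one from the vertex identification, whose $\F$-image is $\delta$, and one from the new edge, whose $\F$-image I denote by $\eta$. The key observation is that $\eta$ is represented by the loop $\alpha''\cdot e_1\cdot\overline{\alpha}$ lying entirely in $X$, where $\alpha''$ is any immersed path in $X\cap Y$ from $v_Z$ to $v$, giving $\eta\in X^{\#}$; similarly the loop $\alpha''\cdot e_2\cdot\overline{\beta}$ lies entirely in $Y$, and since $f_Z(e_1)=f_Z(e_2)$ its $\F$-image equals $\eta\cdot\delta$, so $\eta\cdot\delta\in Y^{\#}$.

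Next I would verify bi-invariance. Since the wedged subgraphs $\Gamma(\psi^{-1}(\delta))$ and $\Gamma(\psi(\delta))$ meet the rest of $Z_2$ only at $v_Z$, one has $X_2\cap Y_2=X_1\cap Y_1$, hence $(X_2\cap Y_2)^{\#}=\langle (X\cap Y)^{\#},\delta,\eta\rangle$. Expanding $\langle(X_2\cap Y_2)^{\#},\psi((X_2\cap Y_2)^{\#})\rangle$ and rewriting $\langle (X\cap Y)^{\#},\psi((X\cap Y)^{\#})\rangle=Y^{\#}$ via the $\psi$-invariance of $(Y,X\cap Y)$, the verification of $\psi$-invariance of $(Y_2,X_2\cap Y_2)$ reduces to checking $\eta,\psi(\eta)\in Y_2^{\#}$. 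The first inclusion follows from $\eta=(\eta\cdot\delta)\cdot\delta^{-1}\in\langle Y^{\#},\delta\rangle\subseteq Y_2^{\#}$; for the second, \cref{iso} yields $\psi(\eta)\in\psi(X^{\#})=Y^{\#}\subseteq Y_2^{\#}$. The $\psi^{-1}$-invariance of $(X_2,X_2\cap Y_2)$ is verified symmetrically using $\psi^{-1}(\eta\cdot\delta)\in\psi^{-1}(Y^{\#})=X^{\#}$ together with $\psi^{-1}(\delta)\in X_2^{\#}$. Finally, $H=\langle (X_2\cap Y_2)^{\#},t\rangle$ follows from the original bi-invariant hypothesis together with $\delta,\eta\in Z^{\#}\subseteq H\cap\F$.

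For the rank inequalities I would use $\rr(Z',X')=-\chi(Z',X')$ in the finite case. A direct cell count gives $\chi(X_1)=\chi(X)-1$, $\chi(Y_1)=\chi(Y)-1$, and $\chi(X_1\cap Y_1)=\chi(X\cap Y)-2$, so by inclusion-exclusion $\chi(Z_1)=\chi(Z)$. Wedging with $\Gamma(\psi^{-1}(\delta))$ (respectively $\Gamma(\psi(\delta))$) further decreases $\chi(X_2)$ (respectively $\chi(Y_2)$) by one, while $\chi(X_2\cap Y_2)$ is unchanged. Substituting into $\rr(Z_2,X_2)=-\chi(Z_2)+\chi(X_2)$ and analysing the four cases yields $\rr(Z_2,X_2)\leqslant\rr(Z,X)$ with strict inequality exactly when $\psi(\delta)\in Y^{\#}$ (so no wedge is added to $Y_1$), and symmetrically for $\rr(Z_2,Y_2)$. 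The main obstacle I anticipate is precisely the bi-invariance verification: the generator $\eta$ of $(X_1\cap Y_1)^{\#}$ is not explicitly accounted for by the wedged circles $\Gamma(\psi^{\pm 1}(\delta))$, so $\psi^{\pm 1}(\eta)$ must be controlled independently. This is exactly where the identities $\eta\in X^{\#}$ and $\eta\cdot\delta\in Y^{\#}$ combine with the isomorphism $\psi\colon X^{\#}\to Y^{\#}$ from \cref{iso} to ensure that adjoining only the wedges of $\Gamma(\psi^{\pm 1}(\delta))$ suffices to restore bi-invariance.
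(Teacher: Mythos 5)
Your proof is correct and fills in exactly the details the paper defers with ``the proof is very similar to the proof of Lemma~\ref{lem:case1}.'' The main substantive point you supply is the observation that in the double exceptional case the intersection $(X_1\cap Y_1)^{\#}$ gains \emph{two} new generators beyond $(X\cap Y)^{\#}$ --- the vertex-identification class $\delta$ and the new-edge class $\eta$ --- whereas in the single exceptional case of Lemma~\ref{lem:case1} there is only one, so the adaptation is not entirely automatic. Your identities $\eta\in X^{\#}$ and $\eta\cdot\delta\in Y^{\#}$, combined with Lemma~\ref{iso}, are precisely what control the extra generator. The paper's intended argument, modelled on Lemma~\ref{lem:case1}, would instead run the containment chains $\psi^{-1}\bigl((X_2\cap Y_2)^{\#}\bigr)\leqslant\psi^{-1}(Y_2^{\#})\leqslant X_2^{\#}$ and $\psi\bigl((X_2\cap Y_2)^{\#}\bigr)\leqslant\psi(X_2^{\#})\leqslant Y_2^{\#}$, using only $X_1^{\#}=\langle X^{\#},\delta\rangle$ and $Y_1^{\#}=\langle Y^{\#},\delta\rangle$; this sidesteps naming $\eta$ explicitly, since $\eta\in(X_2\cap Y_2)^{\#}\leqslant Y_2^{\#}$ is absorbed by the chain. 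Both routes invoke the same key fact $\psi(X^{\#})=Y^{\#}$ and reach the same conclusion; yours is slightly more explicit and perhaps more transparent about why the second generator causes no trouble. The Euler characteristic bookkeeping ($\chi(X_1)=\chi(X)-1$, $\chi(Y_1)=\chi(Y)-1$, $\chi(X_1\cap Y_1)=\chi(X\cap Y)-2$, $\chi(Z_1)=\chi(Z)$) and the resulting case analysis for the strict inequalities are correct. One small remark: the paper's definition of $\delta$ omits the bar on $\beta$ (compare Lemma~\ref{lem:case1}), which you implicitly and correctly read as $\delta=[(f_Z\circ\alpha)*(f_Z\circ\overline\beta)]$.
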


\begin{proof}
The proof is very similar to the proof of \cref{lem:case1} and so we leave the details to the reader.
\end{proof}

\subsection{Folding and adding loops if necessary}

We may now define folding and adding loops if necessary for graph triples.

\begin{definition}
\label{def:folding2}
If $(Z, Y, X)$ is a $\psi$-bi-invariant graph triple, $q\colon Z\to Z_1$ is a fold and $(Z_2, Y_2, X_2)$ is
\begin{itemize}
\item the $\psi$-bi-invariant graph triple $(Z_1, Y_1, X_1)$ if $q$ is an ordinary fold (see \cref{ordinary_fold_rr}),
\item the $\psi$-bi-invariant graph triple obtained in \cref{lem:case1} if $q$ is a single exceptional fold (after possibly exchanging the roles of $X$ and $Y$),
\item the $\psi$-bi-invariant graph triple obtained in \cref{double_fold_fix} if $q$ is a double exceptional fold,
\end{itemize}
then we say that $(Z_2, Y_2, X_2)$ is obtained from $(Z, Y, X)$ by \emph{folding and adding loops if necessary}.

If $\check{q}_X$ is a fold, then we say that $(Z_2, Y_2, X_2)$ is obtained from $(Z, Y, X)$ by \emph{folding $X$ and adding a loop if necessary} (two loops cannot be added). If $\check{q}_Y$ is a fold, then we say that $(Z_2, Y_2, X_2)$ is obtained from $(Z, Y, X)$ by \emph{folding $Y$ and adding a loop if necessary}.
\end{definition}

\begin{remark}
\label{FH_remark}
If the graph triple $(Z_2, Y_2, X_2)$ is obtained from $(Z, Y, X)$ by folding $Y$ and adding a loop if necessary, then the graph pair $(Y_2, X_2\cap Y_2)$ is obtained from $(Y, X\cap Y)$ by folding and adding a loop if necessary in Feighn--Handel's sense \cite{FH99}. Similarly, if the graph triple $(Z_2, Y_2, X_2)$  is obtained from $(Z, Y, X)$ by folding $X$ and adding a loop if necessary, then the graph pair $(X_2, X_2\cap Y_2)$ is obtained from $(X, X\cap Y)$ by folding and adding a loop if necessary in Feighn--Handel's sense (in the $\psi^{-1}$ direction).
\end{remark}

We summarise the most important properties of graph triples obtained by folding and adding loops if necessary that we have established below.

\begin{proposition}
\label{summary}
If $q\colon Z\to Z_1$ is a fold and if $(Z_2, Y_2, X_2)$ is obtained from $(Z_1, Y_1, X_1)$ by adding loops if necessary, then $(Z_2, Y_2, X_2)$ is $\psi$-bi-invariant graph triple for $H$ and the following holds:
\begin{enumerate}
\item We have
\begin{align*}
\rr(Z_2, Y_2) &\leqslant \rr(Z, Y)\\
\rr(Z_2, X_2) &\leqslant \rr(Z, X)
\end{align*}
In particular, if $(Z, Y, X)$ is minimal, then so is $(Z_2, Y_2, X_2)$.
\item If one of the following holds:
\begin{itemize}
\item $q$ is a subgraph fold with $e_1\cup e_2$ a bigon not contained in $X\cap Y$.
\item $q$ is a single exceptional fold and no loop is added.
\item $q$ is a double exceptional fold and at most one loop is added.
\end{itemize}
Then
\[
\rr(Z_2, Y_2) + \rr(Z_2, X_2) < \rr(Z, Y) +\rr(Z, X)
\]
if $\rr(Z, Y), \rr(Z, X)<\infty$.
\item If $Z$ is finite and $q$ is not as in \eqref{itm:ordinary_case2} from \cref{ordinary_fold}, then either the number of vertices in $X_2\cap Y_2$ is strictly less than the number of vertices in $X\cap Y$, or the number of edges in $Z_2$ is strictly less than the number of edges in $Z$.
\end{enumerate}
\end{proposition}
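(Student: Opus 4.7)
The plan is to combine the preceding lemmas with a case analysis on the type of fold $q$; the statement is essentially a summary of what has already been established for each fold type.

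For (1), the $\psi$-bi-invariance of $(Z_2, Y_2, X_2)$ as a graph triple for $H$, together with the reduced rank inequalities $\rr(Z_2, Y_2) \leqslant \rr(Z, Y)$ and $\rr(Z_2, X_2) \leqslant \rr(Z, X)$, is precisely the content of \cref{ordinary_fold_rr} when $q$ is an ordinary fold, of \cref{lem:case1} when $q$ is a single exceptional fold (possibly after exchanging the roles of $X$ and $Y$), and of \cref{double_fold_fix} when $q$ is a double exceptional fold. Minimality of $(Z_2, Y_2, X_2)$ then follows immediately: since $(Z, Y, X)$ realizes the infimum of both reduced ranks among finite bi-invariant triples for $H$, the inequalities just recorded must in fact be equalities, so $(Z_2, Y_2, X_2)$ also realizes these infima.

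For (2), each of the three bullets corresponds to a strict-decrease clause in one of those same lemmas. The bigon case is exactly the moreover clause of \cref{ordinary_fold_rr}. The single exceptional fold with no loop added falls in the subcase $\psi^{-1}(\delta) \in X^{\#}$ (or the symmetric condition in the $Y$-exceptional case) of \cref{lem:case1}, yielding $\rr(Z_2, Y_2) < \rr(Z, Y)$. For a double exceptional fold with at most one loop added, at least one of $\psi^{-1}(\delta) \in X^{\#}$ or $\psi(\delta) \in Y^{\#}$ holds, so \cref{double_fold_fix} delivers strict decrease of one of the two reduced ranks. In every case, pairing the strict inequality with the non-strict inequality from (1) for the other component yields strict decrease of the sum.

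For (3), I would proceed by case-by-case inspection using \cref{ordinary_fold,single_fold,double_fold}. In every subcase except \eqref{itm:ordinary_case2} of \cref{ordinary_fold}, either $\check{q}_{X\cap Y}$ identifies a pair of vertices of $X\cap Y$ (this is the behavior in subcases \eqref{itm:case1a} and \eqref{itm:case2a} of \cref{single_fold} and in the situation of \cref{double_fold}), in which case the vertex count of $X_2\cap Y_2 = X_1\cap Y_1$ strictly drops; or else $\check{q}_{X\cap Y}$ is a homeomorphism but the fold still identifies two edges of $Z$, so the edge count of $Z_1$ strictly decreases. The key observation keeping the vertex-count alternative robust under the wedging step is that $\Gamma(\psi^{\pm 1}(\delta))$ is wedged at the basepoint and meets $Y_2$ (respectively $X_2$) only there, so $X_2\cap Y_2 = X_1\cap Y_1$ and wedging does not inflate the vertex count of the intersection. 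The main obstacle I anticipate is organizing the subcase analysis cleanly, and in particular verifying in the edge-count subcases that either no loop is added or the added loop does not offset the single-edge reduction provided by the fold.
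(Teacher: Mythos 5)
The paper presents Proposition \ref{summary} as a pure summary without a displayed proof, so your strategy---appealing to \cref{ordinary_fold_rr}, \cref{lem:case1}, and \cref{double_fold_fix}---is exactly what the paper intends. Parts (1) and (2) of your argument are correct: the non-strict inequalities and bi-invariance for $H$ are precisely the conclusions of those three lemmas in their respective cases, minimality follows by the infimum argument you give, and for (2) each bullet is matched to the corresponding strict-decrease clause (the bigon clause in \cref{ordinary_fold_rr}; the $\psi^{-1}(\delta)\in X^{\#}$ clause in \cref{lem:case1}; and at least one of the two clauses in \cref{double_fold_fix}).

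For part (3), however, the obstacle you flag at the end is real and your dichotomy does not close it. Your trichotomy is slightly misdescribed to begin with (a fold of a bigon inside $X\cap Y$ neither identifies vertices nor is a homeomorphism, but that case is harmless since the edge count of $Z_2 = Z_1$ drops), but the genuine problem is subcase \eqref{itm:case1b} of \cref{single_fold} (and its mirror \eqref{itm:case2b}). There $\check{q}_{X\cap Y}$ is a homeomorphism and $X_1\cap Y_1 = q(X\cap Y)\sqcup q(e_1)$, with both endpoints $q(v)$ and $q(w_1)=q(w_2)$ already in $q(X\cap Y)$, so the vertex count of $X_2\cap Y_2 = X_1\cap Y_1$ does not decrease. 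Meanwhile, if $\psi^{-1}(\delta)\notin X^{\#}$ a loop $\Gamma(\psi^{-1}(\delta))$ is wedged on, which contributes at least one edge; since the fold removes exactly one edge, the edge count of $Z_2$ need not decrease either. Your appeal to ``the added loop does not offset the single-edge reduction'' is precisely the step you would need to justify, and nothing in \cref{lem:case1} gives it. One can observe that this subcase never arises in the tightening $X$ or $Y$ procedures (there one always folds a pair of edges both lying in $X$, respectively both in $Y$, whereas \eqref{itm:case1b} requires $e_2\subset Y - X\cap Y$), so the application in \cref{tightening_X_or_Y} is unaffected; but to prove part (3) as literally stated you must either rule out the wedging in subcases \eqref{itm:case1b}/\eqref{itm:case2b}, or restrict the hypothesis to exclude them alongside \eqref{itm:ordinary_case2}.
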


\subsection{Tightening $X$, $Y$ and $Z$}

We are now ready to describe our variation of the Feighn--Handel tightening procedure for graph triples.

\begin{definition}{[\emph{Tightening $X$, tightening $Y$}]}
Let $(Z, Y, X)$ be a $\psi$-bi-invariant graph triple. If $X$ is tight then do nothing. If $X\cap Y$ is not tight, then fold a pair of edges in $X\cap Y$. If $X\cap Y$ is tight, but $X$ is not tight, then fold a pair of edges in $X$ and add a loop if necessary. Repeat until we obtain a new $\psi$-bi-invariant graph triple $(\check{Z}, \check{Y}, \check{X})$ with $\check{X}$ tight. We say $(\check{Z}, \check{Y}, \check{X})$ is obtained from $(Z, Y, X)$ by \emph{tightening $X$}. We may similarly define the \emph{tightening $Y$} procedure.
\end{definition}

\begin{remark}
\label{remark:FH}
If $(Z, Y, X)$ is a $\psi$-bi-invariant graph triple and $(\check{Z}, \check{Y}, \check{X})$ is obtained from $(Z, Y, X)$ by the tightening $Y$ procedure, then the graph pair $(\check{Y}, \check{X}\cap \check{Y})$ is obtained from the graph pair $(Y, X\cap Y)$ by tightening in the sense of Feighn--Handel \cite{FH99}, see \cref{FH_remark}. The same may be said about the graph pair $(\check{X}, \check{X}\cap \check{Y})$, except that we replace the automorphism $\psi$ with the inverse $\psi^{-1}$.
\end{remark}

One could derive the following from \cref{FH_remark} and the remarks in \cite[Definition 4.8]{FH99}. We prove it directly using \cref{summary}.

\begin{lemma}
\label{tightening_X_or_Y}
If $(Z, Y, X)$ is a finite $\psi$-bi-invariant graph triple for $H$, then the tightening of $X$ procedure terminates in a $\psi$-bi-invariant graph triple $(\check{Z}, \check{Y}, \check{X})$ for $H$ with $\check{X}$ tight. Similarly, the tightening of $Y$ procedure terminates in a $\psi$-bi-invariant graph triple $(\check{Z}, \check{Y}, \check{X})$ for $H$ with $\check{Y}$ tight. In both cases, we have $\rr(\check{Z}, \check{X}) \leqslant \rr(Z, X)$, $\rr(\check{Z}, \check{Y}) \leqslant \rr(Z, Y)$.
\end{lemma}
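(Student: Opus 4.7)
The plan is to prove termination of the tightening $X$ procedure by exhibiting a well-founded lexicographic complexity measure that strictly decreases at every step, and then to piece together the $\psi$-bi-invariance and the rank inequalities from \cref{summary}. Define
\[
\mu(Z, Y, X) = (|V(X \cap Y)|, |E(Z)|) \in \N \times \N
\]
equipped with the lexicographic order, which is well-founded (any infinite descending chain forces the first coordinate to stabilise, after which the second coordinate is strictly decreasing in $\N$).

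The first step is to note that in the tightening $X$ procedure the folded pair $e_1, e_2$ always lies in $X$ (either both in $X \cap Y$ when $X \cap Y$ is not yet tight, or both in $X$ once $X \cap Y$ is tight). Consequently case \eqref{itm:ordinary_case2} of \cref{ordinary_fold}, which requires an edge $e_i \subset Y - X \cap Y$, never arises. Therefore \cref{summary} item (3) applies at every step, yielding either $|V(X_2 \cap Y_2)| < |V(X \cap Y)|$ or $|E(Z_2)| < |E(Z)|$. In either case $\mu$ strictly decreases in the lex order, so after finitely many iterations the procedure must halt; by the definition of the procedure, it halts exactly when $\check{X}$ is tight. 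The tightening $Y$ procedure is handled identically by exchanging the roles of $X$ and $Y$ (and swapping $\psi$ with $\psi^{-1}$ in the bi-invariance conditions); the same measure $\mu$ works by symmetry.

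Next, the $\psi$-bi-invariance of the final triple $(\check{Z}, \check{Y}, \check{X})$ for $H$ and the reduced rank inequalities follow from a straightforward induction on the number of folds, applying \cref{summary} item (1) at each step: the triple remains $\psi$-bi-invariant for $H$ and $\rr(Z_i, X_i) \leqslant \rr(Z, X)$, $\rr(Z_i, Y_i) \leqslant \rr(Z, Y)$ throughout. The initial triple is finite and each step preserves finiteness of $Z_i$ (the added loop $\Gamma(\psi^{-1}(\delta))$ is a finite cycle graph), so $\mu$ is a well-defined element of $\N \times \N$ at every step.

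The only subtle point is the behaviour of $\mu$ under a single $X$-exceptional fold where a loop $\Gamma(\psi^{-1}(\delta))$ is wedged onto $X_1$, since this can increase $|E(Z)|$. This is exactly the situation handled by the exclusion-of-case-(ii) analysis above: in this case the fold falls into case (a) of \cref{single_fold}, so $\check{q}_{X \cap Y}$ identifies a pair of vertices of $X \cap Y$, and the attached loop meets $Y_1$ only at the basepoint. Hence $X_2 \cap Y_2 = q(X \cap Y)$ and the first coordinate of $\mu$ strictly decreases, guaranteeing the lex decrease regardless of the change in $|E(Z)|$. This is the main obstacle to verify, and it is precisely what \cref{summary} item (3) encodes.
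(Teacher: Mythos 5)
Your proof is correct and takes essentially the same approach as the paper's: the paper argues that only finitely many exceptional folds can occur because $\lvert V(X_i \cap Y_i)\rvert$ strictly decreases at each one and never increases, and that the remaining ordinary folds strictly decrease the edge count of $Z$, which is exactly your lexicographic measure $\mu$ in different words. The one point worth stating more explicitly (which you handle only for the exceptional case) is that the exclusion of the problematic cases forces $X_{i+1}\cap Y_{i+1} = q_i(X_i\cap Y_i)$ at \emph{every} step of the tightening $X$ procedure, so the first coordinate of $\mu$ is genuinely non-increasing throughout, which is what makes the lex argument airtight.
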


\begin{proof}
The fact that after each folding and adding loops if necessary we obtain a new finite $\psi$-bi-invariant graph triple for $H$ is by \cref{summary}. Letting
\[
\begin{tikzcd}
{(Z_0, Y_0, X_0)} \arrow[r, "q_0"] & {(Z_1, Y_1, X_1)} \arrow[r, "q_1"] & \ldots \arrow[r, "q_{k-1}"] & {(Z_k, Y_k, X_k)} \arrow[r, "q_k"] & {\ldots}
\end{tikzcd}
\]
be the sequence of folds and adding loops if necessary from the procedure, where $(Z, Y, X) = (Z_0, Y_0, X_0)$. By \cref{summary}, we have that $q_i(X_i\cap Y_i) = X_{i+1}\cap Y_{i+1}$ for all $i$. If $q_i$ is an exceptional fold, then this implies that the number of vertices in $X_{i+1}\cap Y_{i+1}$ is strictly less than the number of vertices in $X_i\cap Y_i$. Hence, only finitely many exceptional folds are performed. If $q_i$ is not an exceptional fold, then no loop is added and so the number of edges in $Z_{i+1}$ is strictly less than the number of edges in $Z_i$. Thus, the procedure must terminate after finitely many steps. 
\end{proof}

\begin{definition}{[\emph{Tightening $X$ and $Y$}]}
Let $(Z, Y, X)$ be a $\psi$-bi-invariant graph triple. If $X$ and $Y$ are tight, then do nothing. If $X$ is not tight, then tighten $X$ or if $Y$ is not tight, then tighten $Y$. Repeat until we obtain a new $\psi$-bi-invariant triple $(\check{Z}, \check{Y}, \check{X})$ with both $\check{Y}$ and $\check{X}$ tight.
\end{definition}

\begin{lemma}
\label{tightening_XY}
If $(Z, Y, X)$ is a finite $\psi$-bi-invariant graph triple for $H$, then the tightening of $X$ and $Y$ procedure terminates in a $\psi$-bi-invariant graph triple $(\check{Z}, \check{Y}, \check{X})$ for $H$ with $\check{Y}$ and $\check{X}$ tight and with $\rr(\check{Z}, \check{X}) \leqslant \rr(Z, X)$, $\rr(\check{Z}, \check{Y}) \leqslant \rr(Z, Y)$.
\end{lemma}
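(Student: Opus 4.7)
The only substantive content to establish is termination; the remaining claims follow routinely. Bi-invariance for $H$ is preserved at every elementary step by the construction of ``folding and adding loops if necessary'' (\cref{def:folding2}), the stopping rule of the procedure guarantees that $\check{X}$ and $\check{Y}$ are tight at the end, and the reduced rank inequalities $\rr(\check{Z},\check{X}) \leqslant \rr(Z,X)$ and $\rr(\check{Z},\check{Y}) \leqslant \rr(Z,Y)$ follow by iterating the corresponding inequalities of \cref{tightening_X_or_Y} across the finitely many individual tightenings once termination is known.

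To prove termination, I would view the entire alternating procedure as a single long sequence of folds, each possibly followed by the addition of one or two loops. The decisive observation is that at any fold in tightening $X$ (respectively tightening $Y$), both folded edges lie in $X$ (respectively in $Y$). This rules out every configuration in which one folded edge lies in $X - X\cap Y$ and the other in $Y - X\cap Y$; in particular, neither a double exceptional fold (\cref{double_fold}) nor an ordinary fold of Case~\eqref{itm:ordinary_case2} of \cref{ordinary_fold} can occur anywhere in the procedure. By \cref{summary}(3), every fold therefore strictly decreases either the number of vertices of $X\cap Y$ or the number of edges of $Z$.

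In fact the vertex count of $X\cap Y$ is monotonically non-increasing throughout the procedure: ordinary folds can only identify vertices, exceptional folds (Cases~\eqref{itm:case1a} and \eqref{itm:case2a} of \cref{single_fold}) strictly decrease the count by one, and any loop added at an exceptional fold is wedged onto $X$ or onto $Y$ at the basepoint alone, so it leaves $X\cap Y$ unchanged. Letting $v_0$ and $e_0$ denote the vertex count of $X\cap Y$ and the edge count of $Z$ in the initial triple, at most $v_0$ exceptional folds ever occur, so at most $v_0$ loops are added, contributing some finite total $L$ of new edges. The edge count of $Z$ is therefore bounded above by $e_0+L$ throughout the procedure. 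Between consecutive exceptional folds only ordinary folds occur, each strictly decreasing the edge count by one without adding any loop, so at most $e_0+L$ ordinary folds can occur in total. Combining the two bounds, only finitely many folds occur and the procedure terminates. The main obstacle is disentangling loop additions, which can transiently increase the edge count of $Z$, from the ordinary edge-decreasing folds; once one observes that exceptional folds are rigidly bounded by the initial vertex count of $X\cap Y$, the cumulative loop contribution is a harmless finite constant.
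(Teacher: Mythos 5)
Your proof is correct and follows essentially the same approach as the paper's: both identify that in either tightening procedure all folds lie within a single subgraph ($X$ or $Y$), so that the vertex count of $X \cap Y$ is non-increasing and strictly decreases at exceptional folds, bounding these in number, while ordinary folds strictly decrease the edge count of $Z$, giving termination. Your bookkeeping with $e_0 + L$ is a slightly more explicit version of the same argument; the paper simply observes that once the finitely many exceptional folds are exhausted, every remaining fold is ordinary and decreases the (then-fixed, finite) edge count.
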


\begin{proof}
The proof is almost identical to that of \cref{tightening_X_or_Y}. We simply need to note that throughout the procedure, only finitely many exceptional folds are performed (using \cref{summary}) and so after all such fold is performed, each subsequent fold is ordinary and strictly decreases the number of edges.
\end{proof}

Combining \cref{remark:FH}, \cite[Proposition 5.4]{FH99} and \cref{tightening_XY} we obtain the following.

\begin{proposition}
\label{prop:injective}
Let $(Z, Y, X)$ be a finite $\psi$-bi-invariant graph triple for $H$ with $\pi_1(X\cap Y, v_Z)\to \pi_1(R, v_R)$ injective, but $\pi_1(X, v_Z)\to \pi_1(R, v_R)$ not injective. If $(\check{Z}, \check{Y}, \check{X})$ is obtained from $(Z, Y, X)$ by the tightening $X$ procedure, then $\rr(\check{Z}, \check{Y}) < \rr(Z, Y)$ and $\rr(\check{Z}, \check{X})\leqslant\rr(Z, X)$.
\end{proposition}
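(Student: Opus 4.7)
The plan is to reduce the proposition to the graph-pair version of Feighn--Handel's strict-decrease statement \cite[Proposition 5.4]{FH99}, using \cref{remark:FH} as the bridge between triples and pairs. The inequality $\rr(\check{Z}, \check{X}) \leqslant \rr(Z, X)$ is the easy half: it is given directly by \cref{tightening_X_or_Y}. So the core of the argument is the strict inequality $\rr(\check{Z}, \check{Y}) < \rr(Z, Y)$.

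First I would observe the Euler-characteristic identity $\rr(Z, Y) = \rr(X, X\cap Y)$. Since $Z = X\cup Y$ is finite, inclusion–exclusion gives $\chi(Z) - \chi(Y) = \chi(X) - \chi(X\cap Y)$, and $\rr = -\chi$ for both of these relative pairs. The same identity applied to $(\check{Z}, \check{Y}, \check{X})$ yields $\rr(\check{Z}, \check{Y}) = \rr(\check{X}, \check{X}\cap\check{Y})$. Consequently, the desired strict inequality on the triple is equivalent to the analogous strict inequality on the pair $(X, X\cap Y)$.

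Next, by \cref{remark:FH}, the tightening $X$ procedure applied to $(Z, Y, X)$ induces precisely the Feighn--Handel tightening of the $\psi^{-1}$-invariant graph pair $(X, X\cap Y)$, producing the pair $(\check{X}, \check{X}\cap\check{Y})$. The hypotheses of the proposition translate directly: $\pi_1(X\cap Y, v_Z)\to\pi_1(R, v_R)$ is injective while $\pi_1(X, v_Z)\to\pi_1(R, v_R)$ is not, which is exactly the injectivity pattern under which \cite[Proposition 5.4]{FH99}, applied to $(X, X\cap Y)$ in the $\psi^{-1}$-direction, yields the strict drop $\rr(\check{X}, \check{X}\cap\check{Y}) < \rr(X, X\cap Y)$. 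Combined with the Euler-characteristic identity of the previous paragraph, this gives $\rr(\check{Z}, \check{Y}) < \rr(Z, Y)$, as required.

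The main obstacle I anticipate is essentially bookkeeping: verifying that \cref{remark:FH} gives a genuinely step-by-step correspondence between the two tightening processes (so that the pair arising from tightening the triple really is the pair that Feighn--Handel tighten), and that the relative-rank identity remains valid through steps in which a loop is added to $X$ or to $X\cap Y$. Both points should follow cleanly from the case analysis in \cref{ordinary_fold}--\cref{double_fold_fix} together with \cref{summary}, but some care is needed because introducing a loop can simultaneously alter the Euler characteristics of several of the subgraphs $X$, $Y$, $X\cap Y$ and $Z$, and one must check the identity $\chi(Z) - \chi(Y) = \chi(X) - \chi(X\cap Y)$ is preserved at each intermediate stage.
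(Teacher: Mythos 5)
Your proof is correct and follows the same route as the paper, which simply cites \cref{remark:FH}, \cite[Proposition 5.4]{FH99} and \cref{tightening_XY} to deduce the result. You make explicit the inclusion-exclusion identity $\rr(Z,Y) = \rr(X, X\cap Y)$ (using the standing convention $Z = X\cup Y$, preserved by all the folding and loop-adding operations) that bridges Feighn--Handel's strict pair-decrease to the triple statement, which the paper leaves implicit.
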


There is one final natural tightening procedure (which we shall not use) that one can define.

\begin{definition}{[\emph{Tightening $Z$}]}
Let $(Z, Y, X)$ be a $\psi$-bi-invariant graph triple. If $Z$ is tight then do nothing. If $X$ is not tight, then tighten $X$ or if $Y$ is not tight, then tighten $Y$. If $X$ and $Y$ are tight, but $Z$ is not tight, then fold and add loops if necessary. Repeat until we obtain a tight $\psi$-bi-invariant triple $(\check{Z}, \check{Y}, \check{X})$.
\end{definition}

We do not know whether there is an analogue of \cref{tightening_XY} for the tightening $Z$ procedure. 

\begin{question}
Does the tightening $Z$ procedure always terminate after finitely many steps when given a finite $\psi$-bi-invariant graph triple?
\end{question}

In \cite{Li25b} direct limits of maps of graph pairs are considered. Using the same ideas, one could take an appropriate direct limit of graph triples to obtain a tight $\psi$-bi-invariant graph triple for a given finitely generated subgroup. It will still have finite relative rank, but may not be finite itself.

\section{Main theorems}

\subsection{Presentations of free-by-cyclic groups}
\label{sec:presentations}

Using \cref{prop:injective} we may obtain a refinement of \cref{FHmain} for graph triples.

\begin{theorem}
\label{FHmain2}
Let $(Z, Y, X)$ be a finite $\psi$-bi-invariant graph triple for $H$ with $(f_Z)_*$ injective and let $C, D\leqslant Z^{\#}$ so that $Z^{\#} = X^{\#}*C = D*Y^{\#} = D*(X\cap Y)^{\#}*C$. The following are equivalent:
\begin{enumerate}
\item\label{itm:min2} The triple $(Z, Y, X)$ is minimal.
\item\label{itm:min3} The $\psi$-invariant pair $(Z, X)$ is minimal.
\item\label{itm:min4} The $\psi^{-1}$-invariant pair $(Z, Y)$ is minimal.
\item\label{itm:inj2} The map
\[
\theta_n\colon\pi_1\left(\bigvee_{i = 0}^n\Gamma(\psi^{-i}(D))\vee(X\cap Y)\vee \bigvee_{i=0}^n\Gamma(\psi^i(C)), v_Z\right) \to \pi_1(R, v_R)
\]
is injective for all $n\geqslant 0$.
\item\label{itm:pres2} We have
\[
H\isom \langle Z^{\#}, t \mid t^{-1}xt = \phi(x), \forall x\in X^{\#}\rangle.
\]
where $\phi\colon X^{\#} \to Y^{\#}$ is the isomorphism given by $\phi = \psi\mid_{X^{\#}}$.
\end{enumerate}
In particular, if any of the above hold, then $-\chi(H) = \rr(Z, X) = \rr(Z, Y)$.
\end{theorem}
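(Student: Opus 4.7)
The plan is to reduce the five-way equivalence to two applications of \cref{FHmain}: one for the $\psi$-invariant pair $(Z, X)$ and one for the $\psi^{-1}$-invariant pair $(Z, Y)$, bridged by the invariance of $-\chi(H)$. A preliminary simplification: by \cref{iso}, $\phi = \psi|_{X^{\#}}$ is an isomorphism $X^{\#} \to Y^{\#}$, so $\rk(X^{\#}) = \rk(Y^{\#})$; combined with $(f_Z)_*$ being injective (which identifies $\pi_1(X) = X^{\#}$, $\pi_1(Y) = Y^{\#}$, $\pi_1(Z) = Z^{\#}$), this yields $\rr(Z, X) = \rr(Z, Y)$ automatically.

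I would first establish the equivalences (2) $\iff$ (5) $\iff$ (3) by applying \cref{FHmain} to each component pair. For the $\psi$-invariant pair $(Z, X)$, the HNN presentation of \cref{FHmain}\eqref{itm:pres} reads $\langle Z^{\#}, t \mid t^{-1}xt = \psi(x), x \in X^{\#}\rangle$, which coincides with (5) because $\psi(X^{\#}) = Y^{\#}$ and $\phi = \psi|_{X^{\#}}$. Applying \cref{FHmain} to the $\psi^{-1}$-invariant pair $(Z, Y)$ gives (3) $\iff$ the presentation $\langle Z^{\#}, t \mid tyt^{-1} = \psi^{-1}(y), y \in Y^{\#}\rangle$, and the substitution $y = \phi(x)$ for $x \in X^{\#}$ recovers (5).

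Next I would prove (1) $\iff$ (5). For (5) $\Rightarrow$ (1), compute $-\chi(H) = \rk(Z^{\#}) - \rk(X^{\#}) = \rr(Z, X) = \rr(Z, Y)$ directly from the HNN structure (using \cref{iso} for the second equality); then for any bi-invariant triple $(Z', Y', X')$ for $H$, the pairs $(Z', X')$ and $(Z', Y')$ are $\psi$- and $\psi^{-1}$-invariant respectively, so \cref{FHmain} forces $\rr(Z', X'), \rr(Z', Y') \geq -\chi(H)$, making $(Z, Y, X)$ minimal. For (1) $\Rightarrow$ (5), I would exhibit at least one bi-invariant triple $(Z_0, Y_0, X_0)$ for $H$ achieving $\rr(Z_0, X_0) = -\chi(H)$: start from a finite minimal $\psi$-invariant pair $(Z^*, X^*)$ for $H$ (which exists by \cref{FHmain}), form the naive bi-invariant triple described in the remark in \cref{sec:triples} with $L = X^{*,\#}$, and apply \cref{tightening_XY} to obtain a tight bi-invariant triple. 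Given such $(Z_0, Y_0, X_0)$, minimality of $(Z, Y, X)$ gives $\rr(Z, X) \leq -\chi(H)$ while the pair lower bound gives $\rr(Z, X) \geq -\chi(H)$, forcing (5).

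For (4) $\iff$ (5), the wedge in (4) combines the two one-sided wedges of \cref{FHmain}\eqref{itm:inj} applied to $(Z, X)$ and $(Z, Y)$, glued along $X \cap Y$, using the decompositions $X^{\#} = D * (X \cap Y)^{\#}$ and $Y^{\#} = (X \cap Y)^{\#} * C$; bi-invariance supplies the compatibility needed to show that injectivity of $\theta_n$ is equivalent to the conjunction of the two one-sided injectivity conditions, hence equivalent to (2) together with (3), hence to (5). The main obstacle is verifying that tightening the naive bi-invariant triple actually drives the relative rank down to $-\chi(H)$ rather than getting stuck at a larger value; this is the key existence step underpinning (1) $\Rightarrow$ (5). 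A secondary delicate point is the combinatorial bookkeeping for (4) $\iff$ (5), where one must track how the $\psi^{-i}(D)$ and $\psi^i(C)$ towers interact through bi-invariance to identify the combined injectivity with the two one-sided ones.
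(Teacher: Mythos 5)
Your chain (2)$\Leftrightarrow$(5)$\Leftrightarrow$(3) and the direction (5)$\Rightarrow$(1) via the invariance of $-\chi(H)$ both match the paper, and the observation that \cref{iso} forces $\rr(Z,X)=\rr(Z,Y)$ is correct. But the direction (1)$\Rightarrow$(5) is the genuine content of the theorem, and this is where your proposal diverges from the paper and leaves a real gap — one you candidly flag yourself. The paper does not prove (1)$\Rightarrow$(5) by exhibiting an auxiliary triple of relative rank $-\chi(H)$; instead it proves (1)$\Rightarrow$(4) directly, by \emph{repeated application of \cref{prop:injective}}: starting from the wedge graphs underlying $\theta_n$, a failure of injectivity produces a bi-invariant graph triple in which $\pi_1(X\cap Y)$ injects but $\pi_1(X)$ (or $\pi_1(Y)$) does not, and tightening then strictly drops $\rr(\check{Z},\check{Y})$ (resp.\ $\rr(\check{Z},\check{X})$), contradicting minimality. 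Your proposal replaces this by ``tighten the naive triple built from a minimal $\psi$-invariant pair $(Z^*,X^*)$'' — but \cref{tightening_XY} only says the relative ranks are non-increasing; nothing prevents them from stabilising strictly above $-\chi(H)$. Worse, the starting data is not adequate: a minimal $\psi$-invariant \emph{pair} $(Z^*,X^*)$ only makes $X^{*\#}$ a free factor of $Z^{*\#}$, while $\psi(X^{*\#})$ need not be one, so the naive triple with $L=X^{*\#}$ is not pre-adapted to the conclusion you are aiming for. Closing this gap requires precisely the content of \cref{prop:injective}, at which point you are effectively reproducing the paper's route.

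There is also a secondary inaccuracy in the (4)$\Leftrightarrow$(5) step. You assert that injectivity of the combined $\theta_n$ in \cref{itm:inj2} is equivalent to the \emph{conjunction} of the two one-sided injectivity conditions coming from \cref{FHmain} applied to $(Z,X)$ and to $(Z,Y)$. That is not obvious, since the combined wedge $\bigvee_{i=0}^n\Gamma(\psi^{-i}(D))\vee(X\cap Y)\vee\bigvee_{i=0}^n\Gamma(\psi^i(C))$ is strictly larger than either one-sided wedge, and injectivity of a map on two overlapping subgraphs does not in general imply injectivity on their union. The paper resolves this cleanly by proving, inductively using $\psi(D*(X\cap Y)^{\#})=(X\cap Y)^{\#}*C$, the identity
\[
\pi_1\left(\bigvee_{i = 0}^n\Gamma(\psi^{-i}(D))\vee(X\cap Y)\vee \bigvee_{i=0}^n\Gamma(\psi^i(C))\right) = \psi^{-n}\left(\pi_1\left((X\cap Y)\vee \bigvee_{i=0}^{2n}\Gamma(\psi^i(C))\right)\right),
\]
which reduces the combined condition to a \emph{single} one-sided condition (for $(Y,X\cap Y)$ at index $2n$) and then invokes \cref{FHmain}. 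You should adopt this identity rather than appealing to a conjunction of the two one-sided conditions.
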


\begin{proof}
The fact that \cref{itm:min3} and \cref{itm:min4} are both equivalent to \cref{itm:pres2} is \cref{FHmain} combined with \cref{iso}. Using the fact that:
\begin{align*}
D*(X\cap Y)^{\#} &= X^{\#} = \langle (X\cap Y)^{\#}, \psi^{-1}((X\cap Y)^{\#})\\
(X\cap Y)^{\#}*C &= Y^{\#} = \langle (X\cap Y)^{\#}, \psi((X\cap Y)^{\#})\rangle
\end{align*}
and that $\psi(D*(X\cap Y)^{\#}) = (X\cap Y)^{\#}*C$ by \cref{iso}, by induction on $n$ we see that
\begin{align*}
\pi_1&\left(\bigvee_{i = 0}^n\Gamma(\psi^{-i}(D))\vee(X\cap Y)\vee \bigvee_{i=0}^n\Gamma(\psi^i(C)), v_X\right)\\
&= \psi^{-n}\left(\pi_1\left((X\cap Y)\vee \bigvee_{i=0}^{2n}\Gamma(\psi^i(C)), v_X\right)\right),
\end{align*}
and so \cref{FHmain} also implies that \cref{itm:pres2} is equivalent to \cref{itm:inj2}. Finally, repeated application of \cref{prop:injective} shows that \cref{itm:min2} implies \cref{itm:inj2}. Finally, since $-\chi(H) = \rr(Z, X) = \rr(Z, Y)$ is a group invariant, we see that \cref{itm:pres2} implies \cref{itm:min2} and we are done.
\end{proof}

\subsection{Proof of \cref{thm:main1}}

It is clear that $G$ is finitely generated if $\F$ admits such a free product decomposition so we only focus on proving the other direction.

Let $R$ be a rose graph with $\pi_1(R, v_R) \isom \F$. Let $(Z, Y, X)$ be a finite minimal $\psi$-bi-invariant graph triple for $\F\rtimes_{\psi}\Z$, with $X$ and $Y$ tight and with $f_Z\colon (Z, v_Z)\to (R, v_R)$ the associated graph map. Applying \cref{FHmain2} we obtain that
\[
\F = \left(\Asterisk_{i= 0}^{-\infty}\psi^i(D)\right)*E*\left(\Asterisk_{i=0}^{\infty}\psi^i(C)\right).
\]
where $E = (X\cap Y)^{\#}$. For each $j\geqslant 0$, denote by
\[
\F_j = \left(\Asterisk_{i= 0}^{-j}\psi^i(D)\right)*E*\left(\Asterisk_{i=0}^{\infty}\psi^i(C)\right).
\]
For each $j\geqslant 0$, we have 
\[
\psi^{-j}(\F_0) = \F_{j} = \psi^{-j}(E)*\left(\Asterisk_{i=-j}^{\infty}\psi^i(C)\right).
\]
Since $\F_j$ is a free factor of $\F$, we see that
\[
\bigast_{i\geqslant -j} \psi^i(C)
\]
is a free factor of $\F$ for all $j\geqslant 0$ and so
\[
\bigast_{i\in \Z} \psi^i(C)
\]
is a free factor of $\F$. Letting $A\leqslant \F$ be a free factor such that $\F = A*\left(\bigast_{i\in \Z} \psi^i(C)\right)$, it suffices to show that $A$ is finitely generated to complete the proof as we put $C_0 = C$. 

Let $A'$ be any finitely generated free factor of $A$. Then there is some $j\geqslant 0$ such that $A'\leqslant \F_{j}$. Since $A'*\bigast_{i\geqslant -j} \psi^i(C)$ is a free factor of $\F$, it is also a free factor of $\F_{j}$. Since $\F_{j}/\normal{\bigast_{i\geqslant -j} \psi^i(C)}\isom \psi^{-j}(E)$, and since the projection map $\F_{j} \to \psi^{-j}(E)$ sends the free factor $A'$ injectively to a free factor of $\psi^{-j}(E)$, it follows that $\rk(A')\leqslant \rk(E)$. Since $A'$ was an arbitrary finitely generated free factor of $A$, we see that $\rk(A)\leqslant \rk(E)$ and so $A$ is finitely generated as claimed.

\begin{remark}
Note that, using the notation of \cref{thm:main1}, $\chi(\F\rtimes_{\psi}\Z) = -\rk(C_0)$. In particular, the rank of $C_0$ is an invariant of the free-by-cyclic group. The rank of the group $A$ is certainly not an invariant since there are groups with several epimorphisms to $\Z$ with finitely generated free kernels of different ranks. However, we do not know whether $\rk(A)$ is an invariant of the automorphism $\psi$. When $C_0 = 1$, then this is trivially the case as $\F = A$.
\end{remark}

\subsection{The geometry of free-by-cyclic groups}

Combining \cref{thm:main1} with the main result from \cite{Li25b} (see also \cite[Theorem 5.1]{Li25b} and the discussion preceding it) we may also state a geometric refinement. The reader is invited to consult Hruska's article \cite{Hr10} for more information on relatively hyperbolic groups and relatively quasi-convex subgroups.

\begin{theorem}
\label{thm:rel_hyp}
Let $\F$ be a free group, let $\psi\in \aut(\F)$ be an automorphism and suppose that $G = \F\rtimes_{\psi}\Z$ is finitely generated. Then $\F$ admits a free product decomposition
\[
\F = A_1*\ldots*A_n*B*\left(\Asterisk_{i\in \Z}C_i\right)
\]
with the following properties:
\begin{enumerate}
\item $A_1, \ldots, A_n, B, C_0$ are each finitely generated and $C_i = \psi^i(C_0)$ for each $i\in \Z$.
\item There is a constant $k$ so that for any finitely generated subgroup $H\leqslant \F$, if $\psi^m(H)$ intersects a conjugate of $H$ non-trivially for some $m\geqslant k$, then $H$ is conjugate into some $A_i$.
\item There is an element $\sigma\in \sym(n)$ and elements $f_1, \ldots, f_n\in \F$ so that $\psi(A_i)^{f_i} = A_{\sigma(i)}$.
\item For each $i$, if $m_i\geqslant 1$ is minimal so that $\sigma^{m_i}(i) = i$, then the group
\[
H_i = \langle A_i, tf_itf_{\sigma(i)}\ldots tf_{\sigma^{m_i-1}(i)}\rangle
\]
is \{fg free\}-by-cyclic.
\item If $\mathcal{P}$ is a collection of conjugacy class representatives of the subgroups $H_i$, then any subgroup of $G$ isomorphic to a \{fg free\}-by-cyclic group is conjugate into a unique $P\in \mathcal{P}$.
\end{enumerate}
 Moreover, $(G, \mathcal{P})$ is relatively hyperbolic and locally relatively quasi-convex.
\end{theorem}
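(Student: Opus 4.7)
The plan is to combine \cref{thm:main1} with the main result of \cite{Li25b} (in the refined form of \cite[Theorem 5.1]{Li25b}).

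First, apply \cref{thm:main1} to $G = \F\rtimes_\psi\Z$ to obtain a free product decomposition $\F = A * \left(\Asterisk_{i\in\Z}C_i\right)$ with $A$ and $C_0$ finitely generated and $C_i = \psi^i(C_0)$. This immediately produces the shifted factors $C_i$ of the statement; what remains is to refine $A$ as $A_1*\ldots*A_n*B$.

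Next, invoke \cite[Theorem 5.1]{Li25b} to obtain a canonical finite collection $\mathcal{P}$ of conjugacy-class representatives of maximal submapping-tori of $G$, together with the fact that $(G,\mathcal{P})$ is relatively hyperbolic and relatively locally quasi-convex. Since $G$ is itself free-by-cyclic, every submapping torus of $G$ is free-by-cyclic, and those that are \{fg free\}-by-cyclic correspond exactly to the maximal finitely generated $\psi$-periodic (up to $\F$-conjugation) free factors of $\F$. Let $A_1,\ldots,A_n$ be representatives of the conjugacy classes of these maximal periodic free factors. Because each $C_i$ has infinite $\psi$-orbit in $\F$, each $A_j$ is conjugate into $A$, and after $\F$-conjugation we may assume $A_j\leqslant A$. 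A standard argument using that the $A_j$ are finitely generated free factors of $\F$ contained in $A$ then produces a finitely generated free group $B$ with $A = A_1*\ldots*A_n*B$.

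Property (3) follows from $\psi$ being an automorphism, which permutes the conjugacy classes of maximal periodic free factors: this yields $\sigma\in\sym(n)$ and $f_i\in\F$ with $\psi(A_i)^{f_i} = A_{\sigma(i)}$. Property (4) is then a direct calculation: the element $t_i = tf_itf_{\sigma(i)}\ldots tf_{\sigma^{m_i-1}(i)}$ satisfies $t_i^{-1}A_it_i = A_i$ and acts on $A_i$ as $\psi^{m_i}$ composed with an inner automorphism, exhibiting $H_i = \langle A_i, t_i\rangle$ as \{fg free\}-by-cyclic. Property (5) restates the maximality built into $\mathcal{P}$.

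The main technical obstacle is property (2), the uniform malnormality of the family $\{A_i\}$ under powers of $\psi$. The natural argument uses bounded coset penetration for the relatively hyperbolic pair $(G,\mathcal{P})$: if $\psi^m(H)\cap H^g$ is non-trivial for some $m$ beyond the acylindricity constant of $(G,\mathcal{P})$, then $\psi^m(H)$ and $H^g$ must simultaneously penetrate deeply into a common $G$-translate of a peripheral, which forces the subgroup they generate into a single $H_i$ and hence $H$ into an $A_i$ up to $\F$-conjugation. Extracting the explicit constant $k$ from the geometric data in \cite{Li25b} is the step that requires the most care; once it is in hand, the relative hyperbolicity and local relative quasi-convexity of $(G,\mathcal{P})$ are exactly what \cite[Theorem 5.1]{Li25b} provides.
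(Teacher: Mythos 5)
The paper gives no proof of \cref{thm:rel_hyp}: it is presented as a geometric refinement obtained by combining \cref{thm:main1} with the main result of \cite{Li25b} (citing \cite[Theorem 5.1]{Li25b} and the discussion preceding it), with everything else deferred to that reference. Your proposal follows the same route, so at the level at which the paper operates, your approach is the intended one.

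That said, two of your elaborations are sketches rather than arguments, and it is worth flagging them. First, knowing that each $A_j$ is individually conjugate into $A$ does not by itself produce a \emph{simultaneous} free-product decomposition $A = A_1 * \cdots * A_n * B$; one needs a Kurosh-type compatibility argument to put the conjugated copies of the $A_j$ into joint general position inside $A$ while keeping them the canonical periodic free factors, and your phrase "a standard argument" is doing real work there. Second, in your derivation of property (2), the non-trivial element of $\psi^m(H)\cap H^g$ need not be parabolic: it could be hyperbolic, and ruling that case out for $m$ large requires a separate argument that uses relative quasi-convexity of $H$ (via local relative quasi-convexity of $G$) together with the $t^m$-translation, not just bounded coset penetration into peripherals. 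Neither point is a gap relative to what this paper actually writes down, since the paper defers them to \cite{Li25b} too; they are simply the parts of your plan that are least self-contained and would need to be checked against the statements actually established in that reference.
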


\subsection{Proof of \cref{thm:main2}}

Let $\F = A*\left(\Asterisk_{i\in \Z}C_i\right)$ be the free product decomposition from \cref{thm:main1}. Let $D$ be a finitely generated free group of rank at least $\rk(C_0)$ and let $\phi\in \aut(D)$ be an atoroidal automorphism so that $D\rtimes_{\phi}\Z$ is hyperbolic by a result of Brinkmann \cite[Theorem 1.2]{Br00} (such an automorphism always exists when the rank of $D$ is at least three). By work of Hagen--Wise \cite[Theorem A]{HW15} combined with Agol's result \cite[Theorem 1.1]{Ag13}, after passing to a finite index subgroup, we may assume that $D\rtimes_{\phi}\Z$ is also compact special. Now choose any free generating set $\mathcal{D}\subset D$ for a free factor of $D$ of rank $\rk(C_0)$. For some $n>\!\!\!>1$, if $\mathcal{D}_n$ denotes the set obtained from $\mathcal{D}$ by elevating each generator to the power $n$, the subgroup $\langle \mathcal{D}_n, t^n\rangle$ is a quasi-convex free group of rank $|\mathcal{D}_n| + 1$ by a result of Arzhantseva \cite[Theorem 1]{Ar01}. Hence, after replacing $\phi$ with $\phi^n$, we may assume that $\langle \mathcal{D}_n, t\rangle$ is a quasi-convex free subgroup and hence that 
\[
\langle \mathcal{D}_n, t\rangle\cap D \isom \Asterisk_{i\in \Z}\langle\phi^i(\mathcal{D}_n)\rangle.
\]
Since $\langle \mathcal{D}_n, t\rangle$ is a quasi-convex subgroup of the hyperbolic and compact special group $D\rtimes_{\theta}\Z$, after possibly passing to a finite index subgroup of $D\rtimes_{\phi}\Z$ containing $\langle \mathcal{D}_n, t\rangle$, we may assume that there is also a retraction $\rho\colon D\rtimes_{\phi}\Z\to \langle \mathcal{D}_n, t\rangle$ by a result of Haglund--Wise (see \cite[Theorem 7.3]{HW08} and its proof).

Choose any isomorphism $\lambda_0\colon C_0\to \langle \mathcal{D}_n\rangle$ and define $\lambda_i\colon C_i\to \langle \phi^i(\mathcal{D}_n)\rangle$ as $\lambda_i = \phi^i\circ\lambda_0\circ\psi^{-i}\mid C_i$. This then extends to an isomorphism 
\[
\lambda\colon \Asterisk_{i\in \Z}C_i\to \Asterisk_{i\in \Z}\langle\phi^i(\mathcal{D}_n)\rangle
\]
so that $\lambda\circ\psi = \phi\circ\lambda$ when restricted to $\Asterisk_{i\in \Z}C_i$.

Now let $\iota\colon \F\to F = A*D$ be the homomorphism given by extending the inclusion of $A$ and $\lambda$. Note that $\iota(\F)\cap D = \Asterisk_{i\in \Z}C_i$. By construction, the homomorphism $\theta\colon A*D\to A*D$ given by $\theta\mid A = \iota\circ\psi\mid A$ and $\theta\mid D = \phi$ is an automorphism which restricts to $\iota\circ\psi$ on $\iota(\F)$. Hence, $\F\rtimes_{\psi}\Z$ embeds into $(A*D)\rtimes_{\theta}\Z$ via the homomorphism extending $\iota$ and the identity map on $\Z$. Moreover, the map $F\rtimes_{\theta}\Z\to \F\rtimes_{\psi}\Z$ given by $\rho$ on $D\rtimes_{\theta}\Z$ and by the identity on $\F\rtimes_{\psi}\Z$ is a retraction.

We now need the following lemma. We thank Ashot Minasyan for pointing this out to us.

\begin{lemma}
\label{lem:retract}
If $G$ is a retract of a torsion-free hyperbolic group $H$, then $G$ is malnormal in $H$.
\end{lemma}

\begin{proof}
Let $\rho\colon H\to G$ be a retraction and let $h\in H$ and suppose that there is some non-trivial $g\in G^h\cap G$. Since $\rho\mid G = \id$, we see that $g = \rho(g)$ and $hgh^{-1} = \rho(hgh^{-1}) = \rho(h)g\rho(h^{-1})$. This implies that $h^{-1}\rho(h)\in C_H(g)$. Since $H$ is torsion-free hyperbolic, centralisers of elements are infinite cyclic. Thus, some power of $h^{-1}\rho(h)$ is equal to a power of $g$. Since $h^{-1}\rho(h)\in \ker(\rho)$ and $H$ is torsion-free, this implies that $h^{-1}\rho(h) = 1$. Thus, $h\in G$ and so $G$ is malnormal as claimed.
\end{proof}

By \cref{lem:retract}, we see that $\langle \mathcal{D}_n, t\rangle$ is malnormal in $D\rtimes_{\phi}\Z$. Since $D\rtimes_{\phi}\Z$ is hyperbolic, we may use a result of Bowditch \cite[Theorem 7.11]{Bo12} to conclude that $(D\rtimes_{\phi}\Z, \{\langle \mathcal{D}_n, t\rangle\})$ is relatively hyperbolic. Then by a result of Dahmani \cite[Theorem 0.1]{Da03}, we see that $(F\rtimes_{\theta}\Z, \{\F\rtimes_{\psi}\Z\})$ is relatively hyperbolic.

\begin{remark}
Replacing the use of \cref{thm:main1} with \cite[Corollary 4.7]{Li25b} and making other minor modifications to the proof of \cref{thm:main2}, one may obtain the analogous result for mapping tori of free groups. This was already obtained by Chong--Wise in \cite[Theorem 1.1]{CW24}.
\end{remark}

\bibliographystyle{amsalpha}
\bibliography{bibliography}

\end{document}